\documentclass[a4paper,twoside,11pt]{article}
\usepackage[english]{babel}
\usepackage[T1]{fontenc}
\usepackage[ansinew]{inputenc}
\usepackage{geometry}
\usepackage{color} 
\geometry{a4paper,left=20mm,right=20mm, top=20mm, bottom=20mm} 

\usepackage{lmodern}

\usepackage{graphicx}

\usepackage{amsmath} \numberwithin{equation}{section}
\usepackage{amsthm}
\usepackage{amsfonts}
\usepackage{amssymb}
\usepackage{graphics}
\usepackage{cases}


\theoremstyle{plain}
\newtheorem{theorem}{Theorem}[section]
\newtheorem{lemma}[theorem]{Lemma}
\newtheorem{corollary}[theorem]{Corollary}
 
\newtheorem{remark}[theorem]{Remark}
\theoremstyle{definition}
\newtheorem{definition}[theorem]{Definition}
    
\newtheorem{example}[theorem]{Example}
\usepackage{color}
\definecolor{orange}{rgb}{1,0.5,0}

\usepackage{enumerate}

\newcommand\C{\mathbb C}         
\newcommand\R{\mathbb R}         
\newcommand\Z{\mathbb Z}         
\newcommand\N{\mathbb N}         
\newcommand\Ha{\mathbb H}       
\newcommand\D{\mathbb D}  
\newcommand\T{\mathbb T}  
\newcommand{\meu}{\alpha} 
\newcommand{\til}{\mu} 
\renewcommand\Im{\mathsf{Im}}        
\renewcommand\Re{\mathsf{Re}}       
\newcommand{\cP}{{\rm\bf P}}  


\newcommand{\supp}{\operatorname{supp}} 

\usepackage[pdfstartview=FitH]{hyperref}

\allowdisplaybreaks[1]

\begin{document}
\parindent 0pt 

\setcounter{section}{0}

\title{Limits of radial multiple SLE and a Burgers-Loewner differential equation}
\date{\today}
\author{Ikkei Hotta\footnote{The first author was supported by JSPS KAKENHI Grant no. 17K14205.} \and Sebastian Schlei{\ss}inger\footnote{The second author was supported by the German Research Foundation (DFG), project no. 401281084.}}

\maketitle

\begin{abstract}
We consider multiple radial SLE as the number of 
curves tends to infinity. We give conditions that imply the tightness of the associated processes given by the Loewner equation.
In the case of equal weights, the infinite-slit limit is described by a Loewner equation whose Herglotz vector field is given by a Burgers 
differential equation.\\
 Furthermore, we investigate a more general form of the Burgers equation. On the one hand, it appears in connection 
with semigroups of probability measures on $\T$ with respect to free convolution. On the other hand, 
the Burgers equation itself is also a Loewner differential equation for 
certain subordination chains.
\end{abstract}

{\bf Keywords:} Schramm-Loewner evolution, multiple SLE, complex Burgers equation, free probability, infinitely divisible distributions.\\

{\bf 2010 Mathematics Subject Classification:}  37L05, 46L54, 60J67.

\tableofcontents

\parindent 0pt

\newpage

\section{Introduction}

The Schramm-Loewner evolution SLE($\kappa$) has been proved to describe the scaling limit of various curves from stochastic geometry 
and statistical physics, see \cite{Lawler:2005}. There are several approaches to generalize this framework to ``multiple SLE'', which 
concerns the simultaneous distribution of $N \in \N$ SLE curves within a given domain, see \cite{Karrila} and the references therein for the historical development and the recent progress.  
Also multiple SLE can be motivated by scaling limits of curves from well known random models. One might think of the critical Ising model which produces several interface curves between clusters of spin -1 and +1, see \cite{Koz09}.\\

In the present article, we consider a multiple radial SLE which consists of $N$ non-intersecting simple SLE($\kappa$) curves ($\kappa\in[0,4]$) connecting $N$ points $x_1,...,x_N$ on the unit circle $\T$ to $0$ within the unit disc $\D$. These curves can be generated by a version of Loewner's differential equation, where we also need to specify weights $\lambda_1,...,\lambda_N \in (0,1)$ with $\lambda_1+...+\lambda_N=1$. The weight $\lambda_k$ can be thought of as the speed of the growth of the $k$-th curve.\\

We address the following question: Which conditions on the points $x_1,...,x_N$ and the weights $\lambda_1,...,\lambda_N$ imply the existence of a limit growth process as $N$ goes to infinity?\\

The following pictures show numerical simulations of such curves.

\begin{figure}[h]
\rule{0pt}{0pt}
\centering
\includegraphics[width=12cm]{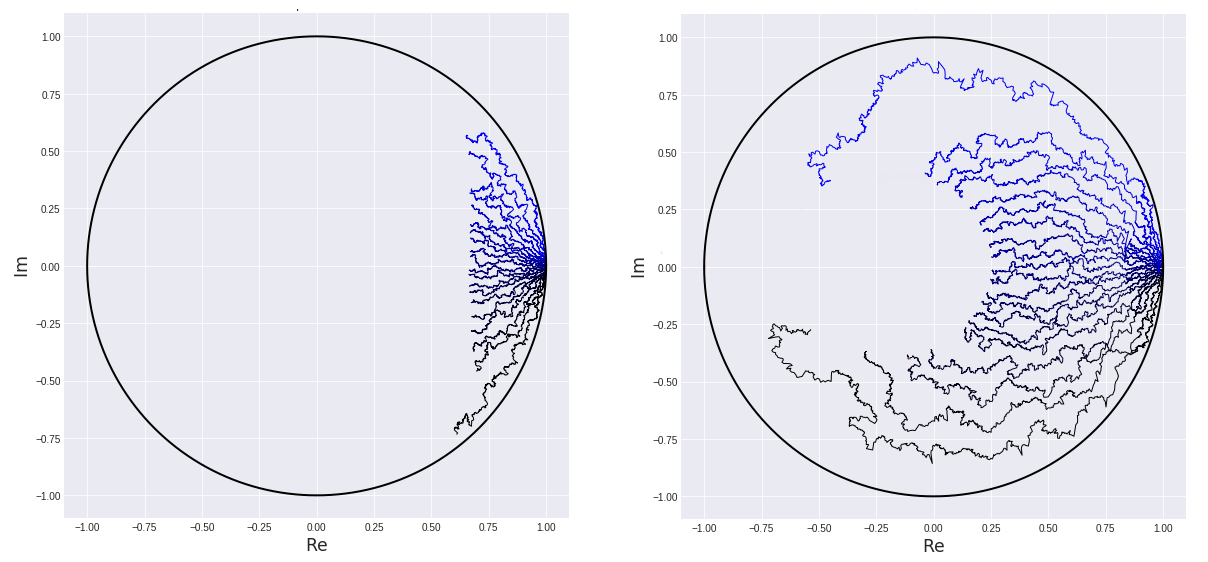}
\caption{$N=20$ multiple SLE(2) curves with equal weights, i.e. $\lambda_k=1/N$, starting near the point $1$ at time $t=0.1$ (left) and $t=1$ (right).}
\end{figure}
	
\begin{figure}[h]
\rule{0pt}{0pt}
\centering
\includegraphics[width=12cm]{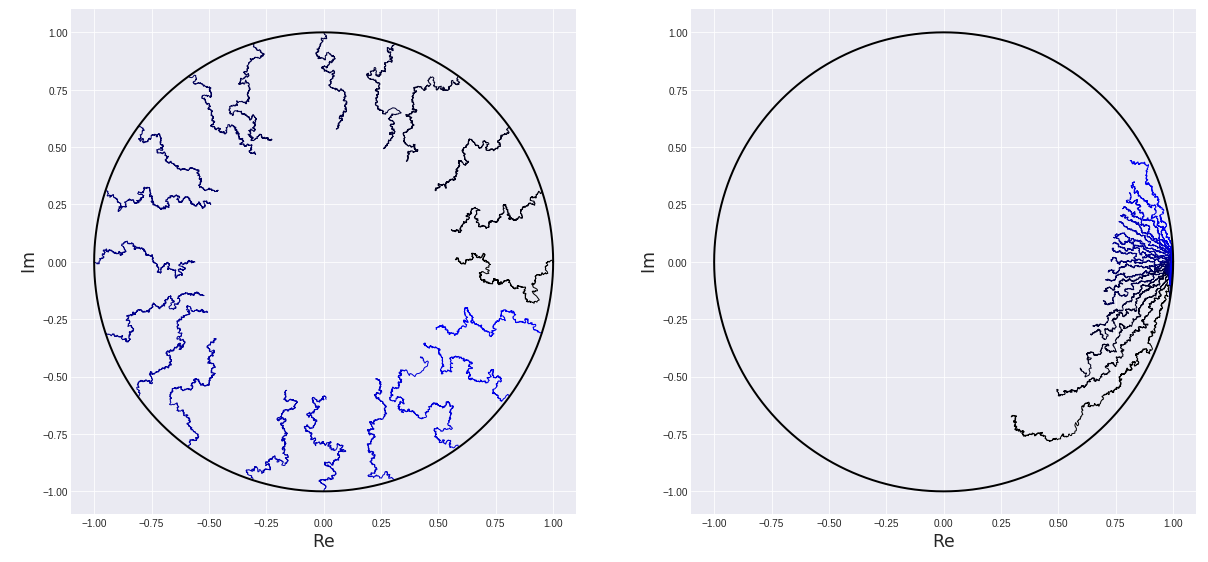}
\caption{Left: $N=20$ multiple SLE(2) curves with equal weights and equally spaced initial points. Right: $N=20$ multiple SLE(2) curves 
starting near the point $1$ with weights $\lambda_1=1/s, \lambda_2=1/(2s),...,\lambda_N=1/(Ns)$, where $s=\sum_{k=1}^N \frac1{k}$.}
\end{figure}	
	
\newpage 

We find conditions on the points and the weights that guarantee tightness of the associated stochastic processes (Section \ref{sec_tightness}). If all the weights are equal ($\lambda_k=1/N$), 
we obtain a simple limit equation involving the a certain complex Burgers equation (Section \ref{Sec_Sim}).\\

These results can be seen as the radial analogue of the works \cite{delMonacoSchleissinger:2016, MR3764710, HK18}, which considered 
this question for chordal multiple SLE. The assumptions in our main result (Theorem \ref{thm:1}) are basically the same as in \cite[Theorem 2.5]{MR3764710}. 
One difference between the radial and the chordal case is the compactness of $\T$, and thus the compactness of the space of all probability measures on $\T$ endowed with the topology induced by weak convergence,
versus the non-compactness of $\R$. The chordal case requires an additional tightness assumption for the starting points on $\R$ (condition (c) in \cite[Theorem 2.5]{MR3764710}).\\

In addition to these works, we point out that the Burgers equation for the unit disc is an instance of a more general evolution equation 
for free semigroups of probability measures on the unit circle (Section \ref{sec_burg_loew}). 
From this perspective, the Burgers equation describes the distributions of a free unitary Brownian motion.\\
We can use the probabilistic interpretation to obtain a geometric property of the hulls described by the Burgers equation, see 
Theorem \ref{thm_geo}.\\
As the literature more often focuses on the chordal case, we also explain the chordal analogue of the free semigroup equation in the appendix.\\

Our paper is organized as follows: In Section \ref{sec_Loewner} we briefly review 
the notion of subordination chains and Loewner's differential equation. In Section \ref{sec_radial}, we investigate the limit behaviour of radial multiple SLE 
as the number of curves converges to infinity. Finally, in Section \ref{sec_burg_loew}, we identify the Burgers equation as an example of a more general differential equation 
arising naturally in free probability theory.

\newpage

\section{Subordination chains and the Loewner equation}\label{sec_Loewner}
We let $\D=\{z\in\C \,|\, |z|<1\}$ be the unit disc, $\T=\partial\D$ the unit circle, 
and $RH=\{z\in\C\,|\, \Re(z)>0\}$ the right half plane.

\begin{definition}
Let $\{f_t\}_{t\geq 0}$ be a family of holomorphic functions on 
$\D$ with $f_t(0)=0$. Then,
\begin{itemize}
 \item $\{f_t\}_{t\geq0}$ is called a \emph{(decreasing and normalized)
Loewner chain} if every $f_t$ is univalent, $f_t(\D)\subset f_s(\D)$ whenever $0\leq s\leq t$, 
$f_0(z)\equiv z$, and $f'_t(0)=e^{-ta}$ for some $a\in RH$.
\item The family $\{f_t\}_{t\geq0}$ is called a \emph{(decreasing and normalized)
subordination chain} if $f_t = f_0 \circ g_t$ for all $t\geq 0$, 
where $\{g_t\}_{t\geq0}$ is a (decreasing and normalized) 
Loewner chain.
\end{itemize}
\end{definition}

Usually, the literature focuses on increasing Loewner chains where 
$f_s(\D) \subset f_t(\D)$ whenever $0\leq s\leq t$. Clearly, 
if $\{f_t\}_{t\in [0,T]}$ is an increasing Loewner chain, then 
$\{f_{T-t}\}_{t\in [0,T]}$
is (a part of) a decreasing Loewner chain.\\

Loewner chains have been introduced by C. Loewner in \cite{MR1512136} in order to attack the Bieberbach conjecture, a coefficient problem 
for univalent functions. We refer the reader to \cite[Chapter 6]{P75} 
and \cite[Chapter 3]{duren83} for the classical treatment of Loewner chains, and to \cite{AbateBracci:2010} for a summary of more recent developments.

Increasing subordination chains (with $f_t'(0)\not=0$) have been considered 
by Pommerenke in \cite[Satz 4]{Pom:1965}.
The case of decreasing Loewner chains and subordination chains is 
handled in \cite{CDMG14}. Subordination chains of the above form are related to Loewner's 
partial differential equation. In our situation we have the following relationship.

\begin{theorem}\label{Pom_sub}
Let $\{F_t\}_{t\geq0}$ be a subordination chain. Then there exists 
a Herglotz vector field $h:\D\times [0,\infty) \to \C$, i.e. 
$z\mapsto h(z,t)$ is holomorphic with $h(0,t)=a\in RH$, $\Re (h(z,t))>0$ for all 
$z\in \D$ and all $t\geq0$ and $t\mapsto h(z,t)$ is measurable for all $z\in\D$, 
such that $\{F_t\}_{t\geq 0}$ is a solution of the initial value problem
\begin{equation}\label{Loe_PDE}
 \frac{\partial}{\partial t}f_t(z) = -z h(z,t) \cdot \frac{\partial}{\partial z}f_t(z) \quad 
 \text{for almost every $t\geq 0$, \quad $f_0= F_0$.}
\end{equation}
Conversely, if $h$ is a Herglotz vector field, then \eqref{Loe_PDE} has a unique 
solution $\{f_t\}_{t\geq0}$, which is a subordination chain.
\end{theorem}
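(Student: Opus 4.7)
The plan is to reduce the general subordination chain statement to the classical Loewner theory for decreasing normalized Loewner chains, the case $F_0(z)=z$. Writing $F_t = F_0 \circ g_t$ with $\{g_t\}_{t\geq 0}$ a decreasing Loewner chain, the chain rule transfers any Loewner PDE for $g_t$ to one for $F_t$ with the same vector field $h$, and uniqueness transfers in the same way. So the real work is in proving the equivalence ``Loewner chain $\leftrightarrow$ Herglotz vector field'' for $\{g_t\}$.

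For the forward direction, the central object is the two-parameter transition family
$$\phi_{s,t}(z) \;:=\; g_s^{-1}(g_t(z)), \qquad 0\leq s \leq t,$$
which is well defined because $g_t(\D)\subset g_s(\D)$ and $g_t(0)=0=g_s(0)$. Each $\phi_{s,t}$ is a holomorphic self-map of $\D$ fixing $0$ with $\phi_{s,t}'(0)=e^{-(t-s)a}$, and the cocycle relation $\phi_{r,t}=\phi_{r,s}\circ\phi_{s,t}$ holds. By the Schwarz lemma these maps are uniformly Lipschitz on compact subsets of $\D$, and the normalization of derivatives at $0$ forces a linear-in-$t$ rate of convergence as $t\downarrow s$. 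From this one extracts, almost everywhere in $s$, a limit
$$h(z,s) \;=\; \lim_{t\downarrow s}\frac{z-\phi_{s,t}(z)}{(t-s)\,z} \qquad (z\in\D\setminus\{0\}),$$
with $h(0,s)=a$, and shows that $z\mapsto h(z,s)$ is holomorphic with $\Re h(\cdot,s)>0$ (since $|\phi_{s,t}(z)|\leq |z|$ by Schwarz) and $s\mapsto h(z,s)$ is measurable. Finally, differentiating the identity $g_t(z)=g_s(\phi_{s,t}(z))$ in $t$ at $t=s$ yields $\partial_t g_t(z)=-z\,h(z,t)\,g_t'(z)$, and composition with $F_0$ gives \eqref{Loe_PDE} for $F_t$.

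For the converse, given a Herglotz vector field $h$, I would solve the characteristic ODE $\dot w(t)=-w(t)\,h(w(t),t)$ with $w(s)=z$. Because $\Re h>0$, the function $t\mapsto |w(t)|^2$ is non-increasing, so the solution stays in $\D$ and defines $\phi_{s,t}:\D\to\D$ with $\phi_{s,t}(0)=0$; holomorphicity in $z$ follows from standard parameter-dependence results for ODE. Setting $f_t:=F_0\circ\phi_{0,t}$ gives a subordination chain solving \eqref{Loe_PDE}, and uniqueness is a Gronwall argument along characteristics together with the fact that any two subordination-chain solutions give rise to the same evolution family.

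The main obstacle is the existence and regularity of $h$ in the forward direction: producing the limit above, verifying it is holomorphic in $z$ with positive real part, and checking measurability in $t$ given that the chain is only absolutely continuous (not smooth) in $t$. This is the core analytic content of Loewner--Kufarev theory for decreasing chains, handled for instance in \cite{CDMG14}; once $h$ is in hand, the rest of the argument consists of the chain-rule bookkeeping sketched above.
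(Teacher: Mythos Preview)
Your approach is essentially the same as the paper's: write $F_t = F_0\circ g_t$, invoke the Loewner--Kufarev correspondence between decreasing Loewner chains and Herglotz vector fields (which the paper cites as \cite[Theorems 1.11 and 3.2]{CDMG14}), and transfer both directions to the subordination chain via the chain rule. The paper simply black-boxes the existence and regularity of $h$ for $\{g_t\}$ by citing \cite{CDMG14}, whereas you sketch the evolution-family argument that lies behind that citation; but structurally the two proofs coincide.
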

\begin{proof}Let ${F_t}$ be a subordination chain with $F_t=F_0\circ g_t$ for all $t\geq 0$, 
where $\{g_t\}$ is a Loewner chain. Note that if $F_0$ is constant (i.e., $F_0(z)=0$ for all $z$), then 
$F_t = F_0$ for all $t\geq 0$ and $F_t$ satisfies \eqref{Loe_PDE} for any Herglotz vector field. \\
Now it is known that $g_t$ satisfies \eqref{Loe_PDE} with 
initial value $f_0(z)\equiv z$, see \cite[Theorem 3.2]{CDMG14}.
The form of the right side and the normalization $h(0,t)=a$ 
follows from the normalization $g_t(0)=0$, $g'_t(0)=e^{-ta}$. A simple computation shows that
$\{F_t\}_{t\geq 0}$ satisfies \eqref{Loe_PDE}.\\
The converse statement follows from \cite[Theorem 3.2]{CDMG14}, \cite[Theorem 1.11]{CDMG14}.
\end{proof}

\begin{remark}
 Assume that $F_0(z)\equiv z$. Then the solution to \eqref{Loe_PDE} is a Loewner chain. 
 Note that two different Herglotz vector fields $h_1$ and $h_2$ can yield the same 
Loewner chain, but if they do, they must coincide for almost every $t\geq 0$; see the comment on ``essential uniqueness'' in \cite[Theorem 3.2]{CDMG14}.\\
Thus, Loewner chains are in a one-to-one correspondence with Herglotz vector fields modulo changes on 
null subsets of the time domain $[0,\infty)$.\\
Also, note that the constant subordination chain $f_t(z)=0$ for all $z\in\D$, $t\geq0$, trivially 
satisfies \eqref{Loe_PDE} for every Herglotz vector field.
 \end{remark}


Now consider a decreasing Loewner chain $\{f_t\}_{t\in [0,T]}$.
Then $t\mapsto g_t(z):=f_t^{-1}(z)$ satisfies the following ordinary differential equation
(see again \cite[Theorem 3.2]{CDMG14})
\begin{equation}\label{Loe_DE}
 \frac{\partial}{\partial t}g_t(z) = g_t(z) h(g_t(z),t)\quad 
 \text{for almost every $t\in [0,T]$ and $z\in f_T(\D)$.}
\end{equation}

In general, we can solve this equation also for $z\in f_0(\D)$, 
but the solution might not be defined for all $t\in[0,T]$ then.  \\

In some cases, it might also be useful to regard the time-reverse version of \eqref{Loe_DE}.
Let $T>0$ and consider

\begin{equation}\label{Loe_DE_rev}
 \frac{\partial}{\partial t}h_t(z) = - h_t(z) h(h_t(z),T-t)\quad 
 \text{for almost every $t\in [0,T]$, $z\in \D$, and $h_0(z)\equiv z$.}
\end{equation}
This equation has a unique solution for every $z\in\D$ and each $h_t:\D\to\D$ is a univalent function. We have $h_T = g_T^{-1}$ (but in general,
 $h_t\not= g_t^{-1}$). In the literature, $(h_t)$ is also called the associated inverse, reverse, or backward Loewner flow, see, e.g., \cite[p. 95]{Lawler:2005}.

\section{Infinite-slit limits of radial multiple SLE}\label{sec_radial}

In the literature the generalization of SLE to multiple SLE curves is discussed in two approaches, a global and a local approach. 
While the local approach describes these curves in a framework of partition functions as solutions to a system of partial differential equations, 
the global approach directly constructs the probability space of the curves by means of the Brownian loop measure.\\
There are several works on the relation between the two approaches and we refer to \cite{Karrila} and the references therein for the recent progress.\\ 

In the simple situation we are looking at, both approaches can be used and we will refer to the work \cite{MR2358649}. Here, we need to restrict ourselves to the case $\kappa\in[0,4]$, which guarantees that the SLE curves are indeed globally defined, see \cite[Section 8]{MR2358649}. 

\subsection{Radial single SLE}

Fix some  $\kappa\in[0,4]$. The radial Schramm-Loewner evolution SLE($\kappa$) is defined 
as the evolution of a random simple curve $\gamma:[0,\infty)\to \overline{\D}$ with 
$\gamma(0)=1$ and $\gamma(0,\infty)\subset \D\setminus\{0\}$.  This curve is defined as follows:\\
Let $B_t$ be a standard one-dimensional Brownian motion and consider the radial Loewner equation (of the form \eqref{Loe_DE}) 
with driving function 
$e^{i\sqrt{\kappa} B_t}$, i.e. 

\begin{equation}\label{radial_SLE}
\frac{\partial}{\partial t}g_{t}(z) = g_{t}(z) \frac{e^{i\sqrt{\kappa} B_t}+g_{t}(z)}{e^{i\sqrt{\kappa} B_t}-g_{t}(z)},
\quad g_{0}(z)=z\in\D.
\end{equation}
Then $g_t$ is a conformal mapping of the form $g_t:\D\setminus \gamma[0,t]\to\D$, with 
$g_t(0)=0$ and $g_t'(0)=e^t$, for a random simple curve $\gamma$. As $t\to \infty$ we have 
$\gamma(t)\to 0$, i.e. $\gamma$ connects the boundary point $p=1$ to the interior point $q=0\in \D$. 
By conformal invariance, this construction can easily be transferred to any Jordan domain $D$ 
and any $p\in \partial D$, $q\in D$.

\begin{remark}
If we replace the point $1$ by some $p\in\T$, we simply rotate the driving function $e^{i\sqrt{\kappa}B_t}$ 
to $V(t)=e^{i\sqrt{\kappa}B_t}p$. By It\={o}'s  formula, the resulting differential equation can also be written as
\begin{equation}\label{radial1}
\frac{\partial}{\partial t}g_{t}(z) = g_{t}(z) \frac{V(t)+g_{t}(z)}{V(t)-g_{t}(z)},
\quad g_{0}(z)=z\in\D,
\end{equation}
with 
\begin{equation}\label{sde1}
dV(t) = -\frac{\kappa}{2} V(t)dt + i\sqrt{\kappa}V(t)dB_t, \quad V(0)=p\in\T.
\end{equation}
The driving function $V$ is simply the image of the tip of the generated curve $\gamma$ under the mapping $g_t$, i.e. $V(t)=g_t(\gamma(t))$.
\end{remark}

\begin{remark} For $\kappa>4$, equation \eqref{radial_SLE} still yields random conformal mappings 
of the form $g_t:\D\setminus K_t\to \D$, but $K_t$ is not a simple curve anymore.
For the Loewner equation and SLE, we refer the reader to \cite{Lawler:2005}. 
\end{remark}

\subsection{Radial SLE($\kappa, \varrho$) }

SLE can be generalized by adding force points, which leads to SLE($\kappa, \varrho$). Again we consider $\kappa\in[0,4]$. Let $m\in\N$ 
and $\varrho_1, ..., \varrho_m\in\R$. Furthermore, let $x_1,...,x_m\in \T$,  
$p\in \T\setminus\{x_1,...,x_m\}$, and let $B_t$ be a Brownian motion. Then radial SLE($\kappa, \varrho_1,...,\varrho_m$) 
with force points $x_1,...,x_m$ and starting point $p$ is defined via \eqref{radial1} 
and the driving function $V$ is given by the following generalization of \eqref{sde1}:

\begin{eqnarray*}
&&dV(t) = -\frac{\kappa}{2} V(t)dt + 
 \sum_{j=1}^m \frac{\varrho_j}{2} V(t)\frac{X_j(t)+V(t)}{X_j(t)-V(t)} dt+
i\sqrt{\kappa}V(t)dB_t, \quad V(0)=p,\\
&& dX_j(t) = X_j(t) \frac{V(t)+X_j(t)}{V(t)-X_j(t)}dt, \quad X_j(0) = x_j.
\end{eqnarray*}
Note that $X_j(t)$ simply solves the Loewner equation, i.e. $X_j(t) = g_t(x_j)$. The solution 
$g_t$ describes the evolution of a random simple curve (because SLE($\kappa, \varrho$) can be represented as a weighted SLE($\kappa$) $\gamma$ and we chose $\kappa\in[0,4]$, see \cite[Section 5]{SW05}),  
but $g_t$ may not be defined for all $t\geq0$. In the first place, it is only defined as long as $V(t)-X_j(t)\not=0$ for all $j$.

\begin{remark}\label{chordal_version} Radial SLE($\kappa, \varrho$)
can also be considered for force points 
within $\D$. In particular, this more general framework unifies radial and chordal SLE, see  \cite[Theorem 3]{SW05}. This explains the factor $\frac1{2}$ for the weights $\frac{\varrho_j}{2}$ 
in the above SDE.\\
Let $C:\D\to \Ha=\{z\in\C\,|\, \Im(z)>0\},$ $C(z)=i\frac{1+z}{1-z}$, 
be the Cayley transform. The chordal Loewner equation
\begin{equation}
\frac{d}{dt}g_{t}(z) = \frac{2}{g_{t}(z)-U(t)}, \quad g_{0}(z)=z\in\Ha,
\end{equation}
where $U$ is real-valued, yields conformal mappings of the form $g_t:\Ha\setminus K_t\to \Ha$.\\
Assume that $p, x_1,...,x_m \in \T\setminus\{1\}$. By \cite[Theorem 3]{SW05}, the sets $K_t$ correspond (up to a time change) to $C(\gamma[0,t])$ if $U$ satisfies 
 \begin{eqnarray*}
&&dU(t) = \sqrt{\kappa}dB_t + \sum_{j=1}^m \frac{\varrho_j}{U(t)-Y_j(t)}dt + (\kappa-6-\sum_{j=1}^m \varrho_j) \Re\left(\frac{1}{U(t)-S(t)}\right)dt, \quad U(0)=C(p)\in\R,\\
&& dY_j(t) = \frac{2}{Y_j(t)-U(t)}dt, \quad Y_j(0) = C(x_j)\in\R,\\
&& dS(t) = \frac{2}{S(t)-U(t)}dt, \quad S(0) = i = C(0).
\end{eqnarray*}
 Note that $Y_j(t)=g_t(C(x_j))$ and $S(t)=g_t(i)$. 
\end{remark}

\subsection{Radial multiple SLE}

Our setting of radial multiple SLE follows the works \cite{MR2004294} and \cite{MR2358649}.\\

Let $p_1,...,p_N$ be $N$ points on $\T$ and fix again $\kappa\in[0,4]$. We now generate $N$ pairwise disjoint simple curves starting at $p_1,...,p_N$ and growing to $0$ via the Loewner equation.\\

First, fix a time interval 
$[0,T]$ and some coefficients $\lambda_1,...,\lambda_N\in(0,1)$ 
with $\lambda_1+...+\lambda_N =1.$ \\
For $t\in[0, \lambda_1 T]$, we use the Loewner equation \eqref{radial1} to generate a curve $\gamma_1$ which is an SLE$(\kappa, 2,...,2)$ 
with starting point $p_1$ and force points $p_2,...,p_N$. Denote the driving function $V$ by $V_1$ and let $V_j(t)=g_t(p_j)$ for $j=2,...,N$. \\
For $t\in[\lambda_1 T, (\lambda_1+\lambda_2)T]$, we generate an SLE$(\kappa, 2,...,2)$ curve with starting point 
$V_2(\lambda_1 T)=g_{\lambda_1 T}(p_2)$ and force points $V_1(\lambda_1 T)=g_{\lambda_1 T}(\gamma_1(\lambda_1 T)), V_3(\lambda_1 T)=g_{\lambda_1 T}(p_3), ..., V_N(\lambda_1 T)$ via \eqref{radial1} 
(and we use a Brownian motion which is independent from the one used for the interval $[0,T]$). Then $g_{(\lambda_1+\lambda_2)T}$ maps $\D\setminus (\Gamma_1 \cup \Gamma_2)$ conformally onto $\D$, where $\Gamma_1$ and $\Gamma_2$ are two disjoint simple curves starting at $p_1$ and $p_2$ respectively. We extend $V_2$ to $[\lambda_1 T,(\lambda_1+\lambda_2)T]$ by letting $V_2$ be the corresponding driving function, and we extend 
$V_1, V_3, ..., V_N$ via $V_j(t)=g_t(p_j)$. In other words, every $V_j$ has the form 
\[ \text{$V_j(t)=g_t(\text{``force point''})$ or $V_j(t)=g_t(\text{``tip of the growing curve''})$}. \]
We extend this construction for the time interval $[0,T]$ and we obtain $N$ disjoint simple curves $\Gamma_1,...,\Gamma_N$ starting from $p_1,...,p_N$. Now we can repeat the procedure for the interval $[T,2T]$, $[2T, 3T]$, etc., and thus the growth of $N$ random curves $\gamma_1, ...,\gamma_N$, connecting $p_1,...,p_N$ to $0$, is defined for all $t\in[0,\infty)$. \\
It has been shown in \cite{MR2358649} that this procedure does in fact not depend on $\lambda_1,...,\lambda_N$, $T$, and the order of $(p_1,...,p_N)$, i.e. 
the distribution of $(\gamma_1, ...,\gamma_N)$ is the same for all possible choices of these parameters (see the case (ii) in \cite[Section 8.2]{MR2358649} and \cite[Theorem 7]{MR2358649}).
This defines \emph{radial multiple SLE($\kappa$)} (from $p_1,...,p_N\in \T$ to $0$).\\

Note that $V_k$ satisfies 
\begin{equation}\label{first_rho} dV_k(t) =  V_k(t) \frac{V_j(t)+V_k(t)}{V_j(t)-V_k(t)}dt\end{equation}
if $j\not= k$ and the $j$-th curve is growing, and 
\begin{equation}\label{second_rho} dV_k(t) = -\frac{\kappa}{2} V_k(t)dt + 
 \sum_{j\not=k} V_k(t)\frac{V_j(t)+V_k(t)}{V_j(t)-V_k(t)} dt+
i\sqrt{\kappa}V_k(t)dB_t,\end{equation}
when the $k$-th curve is growing.\\

In contrast to the above construction, we can also generate the curves simultaneously.
Now we use the coefficients $\lambda_1,...,\lambda_N$ as ``speeds'' in the 
multi-slit Loewner equation 
\begin{equation*}
\frac{d}{dt}g_{t}(z) = g_{t} (z)\sum_{k=1}^N \lambda_k \frac{X_k(t)+g_{t}(z)}{X_k(t)-g_{t}(z)},
\quad g_{0}(z)=z\in\D.
\end{equation*}
One can now derive a commutation relation for the infinitesimal generators for $X_1,...,X_n$ (and use the fact that 
 $s\mapsto \gamma_1[0,t]\cup ... \cup \gamma_j[0,t+s] \cup ... \cup \gamma_N[0,t]$ describes 
an SLE($\kappa, 2,...,2$) process in $\D\setminus (\gamma_1[0,t]\cup...\cup \gamma_N[0,t])$), which leads to 
the following differential equations (see \cite[Section 8.2]{MR2358649}):

\begin{eqnarray*}dX_k(t)=
 -\frac{\kappa \lambda_k}{2} X_k(t)dt + 
 \sum_{j\not=k}(\lambda_k+\lambda_j) X_k(t)\frac{X_j(t)+X_k(t)}{X_j(t)-X_k(t)} dt+
i\sqrt{\kappa \lambda_k}X_k(t)dW_k(t),\quad X_k(0)=p_k,
\end{eqnarray*}
where $W_1,...,W_N$ are independent Brownian motions. (See also \cite[Section 5]{graham} for the corresponding calculations in the chordal case.)

\begin{remark}
 
Informally, we can consider the limit $T\to 0$ for $V_k$ in \eqref{first_rho}, \eqref{second_rho}, which leads to a simultaneous growth of the curves, and 
$dX_k$ is obtained as a convex combination of the $N$ terms with weights $\lambda_1,...,\lambda_N$, i.e. we obtain
\begin{align*}dX_k(t)
 &=  -\frac{\kappa }{2} X_k(t)d(\lambda_kt) + 
 \sum_{j\not=k}  X_k(t)\frac{X_j(t)+X_k(t)}{X_j(t)-X_k(t)} d(\lambda_k t)+\\
&\quad+ i\sqrt{\kappa }X_k(t)dB_k(\lambda_k t) + 
\sum_{j\not=k} X_k(t) \frac{X_j(t)+X_k(t)}{X_j(t)-X_k(t)}d(\lambda_j t)\\
&=
 -\frac{\kappa \lambda_k}{2} X_k(t)dt + 
 \sum_{j\not=k}(\lambda_k+\lambda_j) X_k(t)\frac{X_j(t)+X_k(t)}{X_j(t)-X_k(t)} dt+
i\sqrt{\kappa \lambda_k}X_k(t)dW_k(t),
\end{align*}
where $B_1,...,B_N$ are independent Brownian motions and we used that 
$B_k(ct)=\sqrt{c}W_k(t)$ for another standard Brownian motion $W_k$.
\end{remark}

\subsection{Limits of radial multiple SLE}

Let $N\in\N$ and $x_{N,1},...,x_{N,N}$ be $N$ points on $\T$, ordered in counter-clockwise direction such that 
$1$ lies on the circular arc $(x_{N,N}, x_{N,1}]$. Furthermore, choose $\lambda_{N,1},...,\lambda_{N,N}\in(0,1)$ such
that $\sum_{k=1}^N\lambda_{N,k}=1.$\\

We now grow $N$ radial multiple SLE curves simultaneously for these parameters, i.e. 
we define $N$ random processes $V_{N,1},...,V_{N,N}$ on $\T$ as the solution of the SDE system
\begin{equation}\label{driving_general}
 dV_{N,k}(t) = V_{N,k}(t)\left(\sum_{j\not=k}(\lambda_{N,k}+\lambda_{N,j})\frac{V_{N,j}(t)+V_{N,k}(t)}{V_{N,j}(t)-V_{N,k}(t)}
 -\frac{\kappa\lambda_{N,k}}{2}\right)dt+iV_{N,k}\sqrt{\kappa \lambda_{N,k}}dB_{N,k}(t),
\end{equation}
with $V_{N,k}(0)=x_{N,k}$ and $B_{N,1},...,B_{N,N}$ are $N$ independent standard Brownian motions.

The corresponding $N$-slit Loewner equation 
\begin{equation}\label{multi2}
\frac{d}{dt}g_{N,t}(z) = g_{N,t}(z) \sum_{k=1}^N \lambda_{N,k} \frac{V_{N,k}(t)+g_{N,t}(z)}{V_{N,k}(t)-g_{N,t}(z)},
\quad g_{N,0}(z)=z\in\D,
\end{equation}
describes the growth of $N$ multiple SLE curves growing from $x_{N,1},...,x_{N,N}$ to $0$.\\
The function $z\mapsto g_{N,t}(z)$ is a conformal mapping from $\D\setminus 
(\gamma_{N,1}[0,t]\cup ...\cup \gamma_{N,N}[0,t])$ onto $\D$,
where the curves $\gamma_{N,k}:[0,\infty)\to \overline{\D}\setminus\{0\}$ are non-intersecting
simple curves with $\gamma_{N,k}(0)=x_{N,k}$ and $g_{N,t}$ has the normalization $g_{N,t}(0)=0,$ $g_{N,t}'(0)=e^{t}$.

The special case $\lambda_{N,k}=\frac1{N}$ for all $k=1,...,N$ (simultaneous growth) leads to 

\begin{equation}\label{driving_simul}
 dV_{N,k}(t) = V_{N,k}(t)\sum_{j\not=k}\frac{2}{N}\frac{V_{N,j}(t)+V_{N,k}(t)}{V_{N,j}(t)-V_{N,k}(t)}dt-
 \frac{\kappa}{2N}V_{N,k}dt+iV_{N,k}\sqrt{\kappa/N}dB_{N,k}(t), \quad V_{N,k}(0)=x_{N,k}.
\end{equation}

\textbf{Problem:}
We are interested in the limit $N\to\infty$ of the growing curves, i.e. the convergence of $\gamma_{N,1}[0,t]\cup ...\cup \gamma_{N,N}[0,t]$ to a set $K_t$, more precisely:\\
 Fix some $t>0.$ Under which conditions does the sequence 
 $\D\setminus (\gamma_{N,1}[0,t]\cup ...\cup \gamma_{N,N}[0,t])$ of domains converge to a 
 (simply connected) domain $\D\setminus K_t$ with respect to kernel convergence? 
 According to Carath\'{e}odory's kernel theorem, this is equivalent to asking for locally uniform convergence of the mappings $g_{N,t}$ to a conformal mapping 
 $g_t:\D\setminus K_t\to\D.$ Also, we would like to be able to describe $g_t$ again by a Loewner equation.\\

In Section \ref{sec_tightness}, we consider the general case 
\eqref{driving_general} and obtain a tightness result under certain assumptions on the coefficients $\lambda_{N,k}$.\\
In Section \ref{Sec_Sim}, we consider the case \eqref{driving_simul} and we prove convergence of $g_{N,t}$. 
The processes \eqref{driving_simul} are related to 
 Dyson's model for unitary random matrices, whose limit behaviour is investigated in \cite{MR1875671}.

\subsection{Tightness}\label{sec_tightness}

\begin{definition}
\label{definition3.5}
Fix $T>0$ and let $\cP(\T)$ be the space of probability measures on $\T$ endowed with 
the topology of weak convergence. Note that $\cP(\T)$ is a metric space due to the well-known L\'{e}vy-Prokhorov metric. We denote by 
$\mathcal{M}(T)=C([0,T], \cP(\T))$ the space of all continuous measure-valued processes on
$[0,T]$ endowed with the topology of uniform convergence.
\end{definition}

Let $\delta_x$ be the Dirac mass at $x$ and let $\meu_{N,t}=\sum_{k=1}^N \lambda_{N,k} \delta_{V_{N,k}(t)}$. Then equation \eqref{multi2} can be written as 
\begin{equation}\label{Poma}\frac{d}{dt}g_{N,t}=g_{N,t}\int_{\partial \D}\frac{x+g_{N,t}}{x-g_{N,t}}\, \meu_{N,t}(dx).\end{equation}

For every $N\in\N,$ $\meu_{N,t}$ can be regarded as a random element from $\mathcal{M}(T)$. 
We now make three assumptions that allow us to prove tightness of the sequence $\{\meu_{N,t}\}_N$. \\

The first assumption: There exists $C>0$ such that for every $N\in\N:$\begin{equation}\label{max}\tag{a}
\max_{k\in\{1,...,N\}} \lambda_{N,k} \leq \frac{C}{N}.
 \end{equation}

Next, let $L_N:[0,1]\to[0,1]$ be defined by 
$L_N(k/N) = \sum_{j=1}^k \lambda_{N,j}$ for $k=0,...,N$ and define $L_N(x)$ by 
linear interpolation for all $x\in[0,1].$
The family $\{L_N\}_{N\in\N}$ is uniformly bounded by $1$ and equicontinuous by \eqref{max}.
The Arzel\`{a}--Ascoli theorem implies that it is precompact. We will assume that the limit exists:
\begin{equation}\label{imp:2}
L_N(x) \to L(x) \quad \text{uniformly on $[0,1]$ as $N\to\infty.$}\tag{b}
 \end{equation}

While the conditions \eqref{max} and \eqref{imp:2} define a ``convergence of the weights'', 
we also need a precise meaning of ``the convergence of the initial points''. For this purpose, we introduce the ``empirical distribution'' $\til_{N,t}=\sum_{k=1}^N \frac{1}{N} \delta_{V_{N,k}(t)}.$
Our third assumption is 
 \begin{equation}\label{assum2}
\til_{N,0} \longrightarrow \mu \quad \text{as $N\to\infty$} \tag{c}
\end{equation}
for some probability measure $\mu$ on $\T$. \\

For $x\in\T$, let $[1,x]$ be the circular 
arc on $\T$ from $1$ to $x$ in counter-clockwise direction. Let 
$F_{N,t}(x)=\meu_{N,t}([1,x])$ and $G_{N,t}(x)=\til_{N,t}([1,x])$ be the cumulative distribution 
functions. Then, $G_{N,t}$ and $F_{N,t}$ are related as follows: 
\begin{equation}
\label{imp:1}
F_{N,t}(x) = L_N(G_{N,t}(x)).
\end{equation}
Due to \eqref{imp:2}, assumption \eqref{assum2} also implies the convergence 
of $\alpha_{N,0}$.\\

We let $C^2(\T,\C)$ be the space of all twice continuously differentiable functions $f:\T\to \C$.

\begin{theorem}\label{thm:1}
Assume that \eqref{max}, \eqref{imp:2}, and \eqref{assum2} hold. Let $T>0$. Then the sequences $\{\til_{N,t}\}_N$ and $\{\meu_{N,t}\}_N$ are tight with respect to $\mathcal{M}(T)$.\\
 Furthermore, let $\{\til_{N_k,t}\}_k$ be a converging subsequence with limit $\til_t.$ Then $\{\meu_{N_k,t}\}_k$ converges to $\meu_t$ which is defined by \begin{equation}\label{TheQueen}
F_t(x)=L\circ G_t(x),\end{equation} where $F_t$ and $G_t$ are the cumulative distribution functions of the measures $\meu_t$ and 
$\til_t$ respectively. Finally, $\til_t$ satisfies the (distributional) differential equation
\begin{equation}\label{mck:1}  \frac{d}{dt}\left(\int_\T f(x)\, \til_t(dx)\right) =  
-\int_{}\int_{\T^2} \frac{xf'(x) - yf'(y)}{x-y}(x+y) \,
\til_t(dx) \meu_t(dy), \quad \til_0=\til, \end{equation} 
for all $f\in C^2(\T, \C).$
\end{theorem}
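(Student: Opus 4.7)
My approach is to apply It\^o's formula to $f(V_{N,k}(\cdot))$ for test functions $f\in C^2(\T,\C)$ and sum with weight $1/N$, obtaining the semimartingale decomposition
\[
\int_\T f\,d\til_{N,t} = \int_\T f\,d\til_{N,0} + \int_0^t D_{N,s}(f)\,ds + M_{N,t}(f).
\]
A direct computation from \eqref{driving_general} expresses $D_{N,s}(f)$ as a double sum over $(k,j)$; symmetrizing in $(k,j)$ and using the symmetry $H(x,y)=H(y,x)$ of the kernel
\[
H(x,y) := \frac{xf'(x)-yf'(y)}{x-y}(x+y),
\]
which extends continuously to the diagonal of $\T^2$ because $f\in C^2$, yields
\[
D_{N,s}(f) = -\iint_{\T^2} H(x,y)\,\meu_{N,s}(dx)\,\til_{N,s}(dy) + R_N(s),
\]
where $|R_N(s)|=O(1/N)$ collects the diagonal $k=j$ contribution, the second-order It\^o correction, and the $-\kappa\lambda_{N,k}/2$ drift, all controlled via \eqref{max}. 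The quadratic variation of $M_{N,t}(f)$ is of order $1/N^2$ for the same reason.

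\textbf{Tightness.} Compactness of $\T$ makes $\cP(\T)$ compact in the weak topology, so tightness of $\{\til_{N,\cdot}\}_N$ in $\mathcal{M}(T)$ reduces, via a countable dense family of $f\in C^2(\T,\C)$, to equicontinuity in probability of the real processes $t\mapsto \int f\,d\til_{N,t}$. The bound $|D_{N,s}(f)|\leq \|H\|_\infty+O(1/N)$ makes the bounded-variation part equi-Lipschitz, and Doob's inequality gives $\sup_{t\leq T}|M_{N,t}(f)|\to 0$ in $L^2$. Tightness of $\{\meu_{N,\cdot}\}_N$ then follows from \eqref{imp:1} together with the uniform convergence $L_N\to L$ in \eqref{imp:2}, observing that $L$ is continuous as a uniform limit of equi-Lipschitz functions on $[0,1]$.

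\textbf{Identification of limits.} For any subsequence $N_k$ with $\til_{N_k,\cdot}\to\til_\cdot$ in $\mathcal{M}(T)$, passing to the limit in \eqref{imp:1} (uniformly in $t$, using $L_{N_k}\to L$ uniformly and the continuity of $L$) yields \eqref{TheQueen} at continuity points of $G_t$ and shows $\meu_{N_k,\cdot}\to\meu_\cdot$ in $\mathcal{M}(T)$. Consequently $\meu_{N_k,s}\otimes\til_{N_k,s}\to\meu_s\otimes\til_s$ weakly, uniformly in $s\in[0,T]$; since $H$ is continuous and bounded on $\T^2$, dominated convergence passes the decomposition of the first step to the limit, using \eqref{assum2} for the initial term and $M_{N_k,t}(f)\to 0$ in $L^2$ for the martingale. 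Using the symmetry of $H$ to relabel the dummy variables gives the integral form of \eqref{mck:1}.

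\textbf{Main obstacle.} The crucial algebraic point is that after pair symmetrization the apparent Cauchy-type singularity $(x+y)/(x-y)$ is absorbed by the difference $xf'(x)-yf'(y)$ into the genuinely continuous and bounded kernel $H$ on $\T^2$. This is what makes the drift $O(1)$ uniformly in $N$ and justifies the weak-convergence passage to the limit; without the $C^2$ hypothesis on $f$, neither the tightness estimate nor the identification step would go through.
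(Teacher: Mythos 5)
Your proposal is correct and follows essentially the same route as the paper: It\^o's formula applied to $\int_\T f\,d\til_{N,t}$, the pair symmetrization that turns the singular interaction into the continuous kernel $H(x,y)=\frac{xf'(x)-yf'(y)}{x-y}(x+y)$ with an $O(1/N)$ diagonal/It\^o/drift remainder controlled by \eqref{max}, tightness via equicontinuity of the drift plus vanishing martingale (the paper invokes the stochastic Arzel\`a--Ascoli theorem), and the transfer to $\meu_{N,t}$ through \eqref{imp:1} and the uniform convergence $L_N\to L$. The only differences are cosmetic (you make explicit the reduction to a countable family of test functions and the Doob estimate, which the paper delegates to cited references), so no further comparison is needed.
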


The proof is similar to the one of \cite[Theorem 4.1]{MR1875671} (diffusion processes on $\T$) and \cite[Theorem 2.5]{MR3764710} (chordal multiple SLE).
\begin{proof} 
Proving tightness of the measure-valued processes $\{\til_{N,t}\}_N$ can be reduced to proving tightness of stochastic
\emph{complex-valued} processes, see \cite[Section 3]{MR1217451} and also \cite[Section 1.3]{MR968996}:\\
 
The sequence $\{\til_{N,t}\}_N$ is tight if 
$$ \left\{\int_{\T} f(x) \til_{N,t}(dx)\right\}_{N\in\N}$$ are tight sequences
(with respect to the space space $C([0,T], \C)$ with uniform convergence) for all 
$f\in C^2(\T,\C)$.\\ 
First, let $f\in C^2(\T,\C)$. It\={o}'s formula gives
\begin{align*}
 & d\left(\int_{\T} f(x) \, \til_{N,t}(dx)\right) = 
 d\left(\sum_{k=1}^N \frac{1}{N} f(V_{N,k}(t)) \right) \\ 
 &=  \sum_{k=1}^N 
 \frac{V_{N,k}(t)}{N}\left( f'(V_{N,k}(t)) \left( \sum_{j\not=k}(\lambda_{N,k}+\lambda_{N,j})
 \frac{V_{N,j}(t)+V_{N,k}(t)}{V_{N,j}(t)-V_{N,k}(t)}-\frac{\kappa\lambda_{N,k}}{2} \right) - 
 \frac{f''(V_{N,k}(t)) \kappa \lambda_{N,k}}{2}\right)dt \\
 &\qquad+\sum_{k=1}^N 
 \frac{V_{N,k}(t)}{N} f'(V_{N,k}(t)) i\sqrt{\kappa \lambda_{N,k}} \,dB_{N,k}(t).
 \end{align*} 
 Now we write 
 \begin{align*}
&\sum_{k=1}^N  \frac{V_{N,k}(t)}{N} f'(V_{N,k}(t))\sum_{j\not=k}\lambda_{N,j}
 \frac{V_{N,j}(t)+V_{N,k}(t)}{V_{N,j}(t)-V_{N,k}(t)} dt + \sum_{k=1}^N  \frac{V_{N,k}(t)}{N} f'(V_{N,k}(t))\sum_{j\not=k}\lambda_{N,k}
 \frac{V_{N,j}(t)+V_{N,k}(t)}{V_{N,j}(t)-V_{N,k}(t)} dt \\
&\quad= \iint_{x\not=y} xf'(x)\frac{y+x}{y-x} \,
\til_{N,t}(dx) \meu_{N,t}(dy) \,dt+ \iint_{x\not=y}  xf'(x)\frac{y+x}{y-x}  \,
\meu_{N,t}(dx) \til_{N,t}(dy) \, dt  \\
&\quad= -\iint_{x\not=y} \frac{xf'(x) - yf'(y)}{x-y}(x+y) \,
\til_{N,t}(dx) \meu_{N,t}(dy)  \,dt \\
&\quad=-\iint_{\T^2} \frac{xf'(x) - yf'(y)}{x-y}(x+y) \,
\til_{N,t}(dx) \meu_{N,t}(dy)  \,dt \\
&\quad\qquad - \sum_{k=1}^N \frac{2\lambda_{N,k}V_{N,k}(t)}{N} (f'(V_{N,k}(t))+V_{N,k}(t)f''(V_{N,k}(t))). 
\end{align*} 
Hence we arrive at
 \begin{align*}
 & d\left(\int_{\T} f(x) \, \til_{N,t}(dx)\right) = \\
 &=\,-\underbrace{\iint_{\T^2} \frac{xf'(x) - yf'(y)}{x-y}(x+y) \,
\til_{N,t}(dx) \meu_{N,t}(dy)}_{:=A_N(t)}  \,dt\\
&\qquad- 
\underbrace{\sum_{k=1}^N \frac{2\lambda_{N,k}V_{N,k}(t)}{N} (f'(V_{N,k}(t))+V_{N,k}(t)f''(V_{N,k}(t)))}_{=:B_N(t)} \,
\,dt \\
&\qquad-  \underbrace{\sum_{k=1}^N 
 \frac{V_{N,k}(t)\lambda_{N,k}\kappa}{2N}\left( f'(V_{N,k}(t)) + f''(V_{N,k}(t)) \right)}_{=:C_N(t)}\,dt + \underbrace{\sum_{k=1}^N 
 \frac{V_{N,k}(t)}{N} f'(V_{N,k}(t)) i\sqrt{\kappa \lambda_{N,k}}}_{=:D_N(t)} \,dB_{N,k}(t).
\end{align*} 

As $f'$ and $f''$ are bounded and $\lambda_{N,k}\leq C/N$, we see that the sums $B_N(t), C_N(t)$, and 
$D_N(t)$ converge absolutely (and uniformly on $[0,T]$) to $0$ with probability $1$ as $N\to\infty$. \\
Furthermore, as $xf'(x)$ is continuously differentiable, the term $A_N(t)$ is uniformly bounded 
with respect to $N$ and $t$. By the stochastic Arzel\`{a}-Ascoli theorem (\cite[Thm. 7.3]{MR1700749}), we conclude that 
$\left\{\int_\T f(x) \til_{N,t}(dx)\right\}_{N\in\N}$ is tight. Hence, $\{\til_{N,t}\}_N$ is tight 
and each limit process satisfies equation \eqref{mck:1}. \\
Because of relation \eqref{imp:1} and our assumption \eqref{imp:2} it follows that
the subsequence $\{\meu_{N_k,t}\}_k$ converges provided that $\{\til_{N_k,t}\}_k$ converges,
and that relation \eqref{TheQueen} holds for the limit processes.
\end{proof}

\begin{remark}
The assumptions in Theorem \ref{thm:1} are basically the same as in the chordal case \cite[Theorem 2.5]{MR3764710}. 
One difference between the radial and the chordal case is the compactness of $\T$ (and thus the compactness of $\cP(\T)$) 
versus the non-compactness of $\R$. In \cite[Example 2.16]{MR3764710}, the authors describe an example of chordal multiple SLE processes where the 
corresponding family $\{\alpha_{N,t}\}_N$ of measure-valued processes is not tight. Even though $\mu_{N,0}$ and $\alpha_{N,0}$ converge as $N\to\infty$, the rightmost driving function $V_{N,N}(t)$ 
satisfies $V_{N,N}(t)\to\infty$ as $N\to\infty$ for every $t>0$.\\
Question: Is there an example of radial multiple SLE data $\{x_{N,k}\}$ and $\{\lambda_{N,k}\}$ such that the process $\{\meu_{N,t}\}_N$ is not tight?
\end{remark}

Next we will  show that if $\{\til_{N_k,t}\}_k$ is a converging subsequence, then $g_{N_k,t}$ 
converges as $k\to\infty$.\\

Let $\lambda$ be the Lebesgue measure on $[0,T]$ and let $\mathcal{N}(T)$ be the space
of all finite measures on $\T\times [0,T]$ endowed with the topology of weak convergence.
We will need the following control-theoretic result. A proof can be found in
\cite[Proposition 1]{MR2919205} or \cite[Theorem 1.1]{quantum}.

\begin{theorem}\label{control}
Let $\{\beta_{N,t}\}_{N\in\N}$ be a sequence of processes
from $\mathcal{M}(T)$ and assume that 
$\beta_{N,t} \lambda(dt)$ converges to $\beta_t \lambda(dt)$ within the space $\mathcal{N}(T)$. Denote by $h_{N,t}$ the solution 
to the Loewner equation
$$ \frac{\partial}{\partial t} h_{N,t}(z) = h_{N,t}(z)\int_\T \frac{x+h_{N,t}(z)}{x-h_{N,t}(z)} \, \beta_{N,t}(dx), 
\quad h_{N,0}(z)=z\in\D.$$
Then $h_{N,t}$ converges locally uniformly to $h_t$ for every $t\in[0,T]$ as $N\to\infty,$ where $(h_t)_{t\in[0,T]}$ is the unique solution to
$$ \frac{\partial}{\partial t} h_t(z) = h_t(z)\int_\T \frac{x+h_t(z)}{x-h_t(z)} \, \beta_t(dx), \quad h_0(z)=z\in\D.$$
\end{theorem}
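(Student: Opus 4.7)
The strategy is the classical continuity-of-solutions argument for the radial Loewner ODE. I would extract a subsequential limit of $\{h_{N,\cdot}\}$ by compactness (Montel in $z$, equicontinuity in $t$), pass to the limit in the integral form of the equation using the weak convergence hypothesis $\beta_{N,s}\lambda(ds)\to \beta_s\lambda(ds)$ in $\mathcal{N}(T)$, and conclude by the uniqueness half of Theorem \ref{Pom_sub}.

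\textbf{Step 1 (compactness).} Since each $\beta_{N,t}\in\cP(\T)$ has total mass one, the Loewner ODE yields the a priori bound
$$\left|\frac{\partial}{\partial t}h_{N,t}(z)\right|\le |h_{N,t}(z)|\cdot\frac{1+|h_{N,t}(z)|}{1-|h_{N,t}(z)|},$$
which is uniform in $N$ and $t$ on any region where $|h_{N,t}(z)|\le r<1$. I would first fix a small closed disc around $0$ on which all $h_{N,t}$ take values in some such $\{|w|\le r\}$ (this is automatic for $z$ near $0$ since $h_{N,t}(0)=0$). Together with Montel's theorem in the variable $z$ (because $h_{N,t}(\D)\subset\D$), the Arzel\`a--Ascoli theorem and a diagonal argument produce a subsequence $h_{N_j,t}$ converging locally uniformly in $(t,z)$ to a continuous limit $\tilde h_t(z)$ that is holomorphic in $z$.

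\textbf{Step 2 (identifying the limit).} Rewrite the equation in integral form and set
$$\Phi_N(x,s):=h_{N,s}(z)\frac{x+h_{N,s}(z)}{x-h_{N,s}(z)},\qquad \Phi(x,s):=\tilde h_s(z)\frac{x+\tilde h_s(z)}{x-\tilde h_s(z)}.$$
Because $|\tilde h_s(z)|\le r<1$ uniformly in $s\in[0,T]$, the function $\Phi$ is continuous on $\T\times[0,T]$ and $\Phi_{N_j}\to\Phi$ uniformly there. The decomposition
$$\int_0^t\!\!\int_\T\Phi_{N_j}\,d\beta_{N_j,s}\,ds=\int_0^t\!\!\int_\T(\Phi_{N_j}-\Phi)\,d\beta_{N_j,s}\,ds+\int_0^t\!\!\int_\T\Phi\,d\beta_{N_j,s}\,ds$$
then handles both pieces separately: the first vanishes in the limit because $\|\Phi_{N_j}-\Phi\|_\infty\to 0$ while each $\beta_{N,s}$ has unit mass; the second converges to $\int_0^t\int_\T\Phi\,d\beta_s\,ds$ by the assumed weak convergence in $\mathcal{N}(T)$ applied to the bounded continuous test function $(x,s)\mapsto\Phi(x,s)$ (after a routine smoothing of the time cutoff $\mathbf{1}_{[0,t]}$, using the fact that $\beta_s\lambda(ds)$ is atomless in $s$ for a.e.\ endpoint $t$). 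Passing $N_j\to\infty$ in the integral equation gives
$$\tilde h_t(z)=z+\int_0^t \tilde h_s(z)\int_\T\frac{x+\tilde h_s(z)}{x-\tilde h_s(z)}\,\beta_s(dx)\,ds,$$
so $\tilde h$ solves the limit Loewner equation.

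\textbf{Step 3 (conclusion and main obstacle).} By the uniqueness clause in Theorem \ref{Pom_sub}, the limit Loewner equation for the Herglotz vector field $w\mapsto\int_\T\frac{x+w}{x-w}\beta_t(dx)$ has a unique solution, so $\tilde h=h$; since every subsequence admits a further subsequence converging to the same limit, the whole sequence $h_{N,\cdot}$ converges locally uniformly. The only genuine technical point is upgrading from a small disc around $0$ (where the bound $|h_{N,t}(z)|\le r<1$ is free) to the full domain of $h_t$. I would handle this by a standard continuation argument: once convergence is known on $\{|z|\le\rho\}$, the Loewner-chain subordination structure lets one nest the images $h_{N,t}(\{|z|\le\rho\})$ inside a slightly larger compact of $\D$ uniformly in $N$, and iterate outward exhausting the domain of $h_t$.
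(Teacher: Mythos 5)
The paper itself does not prove Theorem \ref{control}; it imports it from \cite[Proposition 1]{MR2919205} and \cite[Theorem 1.1]{quantum}. So your attempt can only be measured against the standard argument, whose architecture (a priori bounds, compactness, identification of the limit, uniqueness, subsequence principle) you have correctly reproduced. Steps 1 and 2 are sound where they apply: the bound $\sup_{N,\,t\le T}|h_{N,t}(z)|\le r<1$ for $z$ in a small disc about $0$ follows from an ODE comparison (or from the Koebe distortion theorem applied to the inverse maps $h_{N,t}^{-1}$, using $h_{N,t}'(0)=e^t$), the uniform convergence $\Phi_{N_j}\to\Phi$ on $\T\times[0,T]$ is correct because $w\mapsto w\frac{x+w}{x-w}$ is Lipschitz on $\{|w|\le r\}$ uniformly in $x\in\T$, and the cutoff issue is even simpler than you suggest: the time-marginal of $\beta_s\lambda(ds)$ is exactly Lebesgue measure, so $\T\times\{t\}$ is null for the limit and the portmanteau theorem applies for \emph{every} $t$. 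One small misattribution: the uniqueness you need is for the ODE $\dot h_t=h_tP(h_t,t)$, which is plain Picard--Lindel\"of (the Herglotz field is locally Lipschitz on compacts of $\D$, uniformly in $t$; see \cite[Theorem 3.2]{CDMG14}), not Theorem \ref{Pom_sub}, which concerns the PDE for the chain $f_t$.

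The genuine gap is exactly the point you flag in Step 3, and your proposed repair does not work as stated. The theorem asserts convergence locally uniformly on the domain of $h_t$, i.e.\ on $\{z:\sup_{s\le t}|h_s(z)|<1\}$, and for $z$ outside a small disc the trajectory $h_{N,s}(z)$ could a priori reach $\T$ (be swallowed) before time $t$, so $h_{N,t}$ need not even be defined there. ``Nesting the images by subordination and iterating outward'' does not address this: subordination composes the flow in \emph{time}, and knowledge of $h_{N,t}$ on $\{|z|\le\rho\}$ gives no control over $h_{N,s}(z)$ for $|z|>\rho$. The standard fix is a quantitative stability estimate along trajectories: fix a compact $K$ with $\sup_{z\in K,\,s\le t}|h_s(z)|\le r<1$, choose $r<r'<1$, and run Gr\"onwall on $|h_{N,s}(z)-h_s(z)|$, valid as long as both trajectories stay in $\{|w|\le r'\}$, with driving error $\sup_{s\le t}\bigl|\int_0^s\int_\T\Phi(x,u)\,(\beta_{N,u}-\beta_u)(dx)\,du\bigr|\to0$ supplied by the hypothesis (with $\Phi$ built from the \emph{limit} trajectory); a bootstrap then shows $h_{N,s}(z)$ never leaves $\{|w|\le r'\}$ for $N$ large, so it is defined on $K$ up to time $t$ and converges there. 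Alternatively one works with the inverse maps $h_{N,t}^{-1}:\D\to\D$, which are globally defined, uniformly bounded and hence normal, and concludes via Carath\'eodory kernel convergence --- essentially the route of \cite{MR2919205}. Without one of these ingredients your argument establishes convergence only on a fixed neighbourhood of the origin, which is strictly weaker than the statement being proved.
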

 
Let $\mathcal{C}$ be the set of all $M(z)=\int_\T \frac{x+z}{x-z} \beta(dx)$, where $\beta$
is a probability measure. The measure $\beta$ can be recovered from $M$ by a version of the 
Stieltjes-Perron inversion formula.
Denote the distribution function of $\beta$ by $F(x)$. Then $L\circ F(x)$ is also a distribution function, 
which corresponds to a measure $\hat{\beta}.$ In this way, we obtain a map $\mathcal{L}:\mathcal{C}\to \mathcal{C},$ 
$$\int_\T \frac{x+z}{x-z} \beta(dx) \mapsto \int_\T \frac{x+z}{x-z} \hat{\beta}(dx).$$
The limit of the Loewner equation can now be described as follows.
\begin{corollary}\label{thm:2}
Assume that \eqref{max}, \eqref{imp:2}, and \eqref{assum2} hold. Let $\{\til_{N_k,t}\}_k$ be a converging subsequence with limit $\mu_t$. Then, as $k\to\infty,$ $g_{N_k,t}$ converges in distribution w.r.t. locally uniform convergence to $g_t,$ the solution of the Loewner equation
\begin{equation}\label{Loe:1}
\frac{\partial}{\partial t} g_t = g_t (\mathcal{L} \circ M_t)(g_t),\quad g_0(z)=z\in\D,
\end{equation}
where $M_t=\int_\T \frac{x+z}{x-z}\mu_t(dx)$ solves the (abstract) differential equation 
\begin{equation}\label{abstractBurgers}
\frac{\partial}{\partial t}M_t =  - z M_t  \cdot \frac{\partial}{\partial z}(\mathcal{L} \circ M_t)
- z \frac{\partial}{\partial z}M_t \cdot (\mathcal{L} \circ M_t), \quad M_0(z) = \int_\T \frac{x+z}{x-z} \til(dx).
\end{equation}

\end{corollary}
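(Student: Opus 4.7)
The plan is to splice Theorem \ref{thm:1} (which handles the driving measures) with Theorem \ref{control} (which handles the Loewner flow), and then to post-process the McKean--Vlasov equation \eqref{mck:1} into the stated PDE for $M_t$ via a single algebraic identity.

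First, by Theorem \ref{thm:1}, the assumption that $\til_{N_k,t}\to\til_t$ in $\mathcal{M}(T)$ (in distribution) implies $\meu_{N_k,t}\to\meu_t$ in $\mathcal{M}(T)$, with the two limits related through $F_t=L\circ G_t$. In Herglotz-integral form this is precisely $(\mathcal{L}\circ M_t)(z)=\int_{\T}\frac{x+z}{x-z}\,\meu_t(dx)$, so the map $\mathcal{L}$ encodes the relation \eqref{TheQueen} on the level of the associated Cauchy transforms. Because the mapping $\beta_\bullet\mapsto \beta_t\lambda(dt)$ from $\mathcal{M}(T)$ into $\mathcal{N}(T)$ is continuous, we obtain convergence $\meu_{N_k,t}\lambda(dt)\to \meu_t\lambda(dt)$ in $\mathcal{N}(T)$, in distribution.

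Next I invoke Theorem \ref{control} with $\beta_{N_k,t}=\meu_{N_k,t}$. The theorem is a deterministic continuity statement for the solution of the Loewner equation \eqref{Poma} as a function of its driving measure, so by the continuous mapping theorem (or by Skorokhod representation, passing to a probability space on which the convergence is almost sure) the random conformal maps $g_{N_k,t}$ converge in distribution, with respect to locally uniform convergence on $\D$, to the solution $g_t$ of $\partial_t g_t=g_t\int_{\T}\frac{x+g_t}{x-g_t}\meu_t(dx)$. Rewriting the right hand side as $g_t(\mathcal{L}\circ M_t)(g_t)$ using the previous paragraph yields \eqref{Loe:1}.

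It remains to identify the PDE \eqref{abstractBurgers} satisfied by $M_t$. The plan is to insert the test function $f_z(x)=\frac{x+z}{x-z}$ (with $z\in\D$ a fixed parameter) into \eqref{mck:1}; because $f_z\in C^2(\T,\C)$ this is allowed. A direct differentiation gives $xf_z'(x)=-\frac{2xz}{(x-z)^2}$, and a short but crucial algebraic computation shows the symmetric identity
\begin{equation*}
\frac{xf_z'(x)-yf_z'(y)}{x-y}\,(x+y) \;=\; z\,\frac{\partial}{\partial z}\!\left[\frac{x+z}{x-z}\cdot\frac{y+z}{y-z}\right],
\end{equation*}
which I would verify by bringing both sides over the common denominator $(x-z)^2(y-z)^2$ and checking that the numerators equal $-2z(x+y)(z^2-xy)$. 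Integrating this identity against $\til_t(dx)\,\meu_t(dy)$, the right hand side factors as $z\,\partial_z\!\left[M_t(z)\,(\mathcal{L}\circ M_t)(z)\right]$ and \eqref{mck:1} becomes $\partial_t M_t=-z\,\partial_z[M_t\,(\mathcal{L}\circ M_t)]$, which is exactly \eqref{abstractBurgers} after applying the product rule. The initial condition $M_0(z)=\int_\T\frac{x+z}{x-z}\til(dx)$ follows from $\til_0=\til$ in Theorem \ref{thm:1}.

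The only real obstacle is a book-keeping one: the convergence provided by Theorem \ref{thm:1} is in distribution, while Theorem \ref{control} is a deterministic continuity result. This is handled in a standard way by Skorokhod embedding (or the continuous mapping theorem for the Borel map $\beta\mapsto g$ sending a driving measure to its Loewner flow). The algebraic identity above is the only place where a genuine computation is needed; everything else is either functorial or an invocation of the two quoted theorems.
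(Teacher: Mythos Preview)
Your proposal is correct and follows essentially the same approach as the paper: both invoke Theorem~\ref{thm:1} for convergence of the driving measures, pass to $\mathcal{N}(T)$ via the continuous mapping theorem, apply Theorem~\ref{control} for convergence of the Loewner flows, and derive \eqref{abstractBurgers} by inserting $f_z(x)=\frac{x+z}{x-z}$ into \eqref{mck:1}. Your packaging of the algebraic identity as $z\,\partial_z\bigl[\tfrac{x+z}{x-z}\tfrac{y+z}{y-z}\bigr]$ is a tidier presentation of the same computation the paper carries out term by term (the paper arrives at $z\tfrac{2y}{(y-z)^2}\tfrac{x+z}{x-z}+z\tfrac{2x}{(x-z)^2}\tfrac{y+z}{y-z}$, which is exactly the product rule expanded), but the substance is identical.
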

\begin{proof}
For $z\in\D$ we consider $f(x)=\frac{x+z}{x-z}\in C^2(\T,\C)$. Then 
$\frac{\partial f}{\partial x} = \frac{-2z}{(x-z)^2}$, 
$\frac{\partial f}{\partial z} = \frac{2x}{(x-z)^2}$, and we obtain
\begin{eqnarray*}
&&\frac{xf'(x) - yf'(y)}{x-y}(x+y)=
\frac{x\frac{-2z}{(x-z)^2} - y\frac{-2z}{(y-z)^2}}{x-y}(x+y)=
(x+y)(-2z)\frac{x\frac{1}{(x-z)^2} - y\frac{1}{(y-z)^2}}{x-y}\\
&=& (x+y)(-2z)\frac{x(y^2-2yz+z^2) - y(x^2-2xz+z^2)}{(x-y)(x-z)^2(y-z)^2}\\
&=& (x+y)(-2z)\frac{x(y^2+z^2) - y(x^2+z^2)}{(x-y)(x-z)^2(y-z)^2}\\
&=& (x+y)(-2z) \frac{z^2-xy}{(x-z)^2(y-z)^2}=
-2z\frac{z^2x-x^2y+z^2y-xy^2}{(x-z)^2(y-z)^2}=\\
&=& -2z\frac{y(z^2-x^2)}{(x-z)^2(y-z)^2} + 
-2z\frac{x(z^2-y^2)}{(x-z)^2(y-z)^2}
=z\frac{2y}{(y-z)^2}\frac{x+z}{x-z} + 
z\frac{2x}{(x-z)^2}\frac{y+z}{y-z}.
\end{eqnarray*} 

Let $M_t(z)=\int_\T \frac{x+z}{x-z} \, \til_t(dx)$. 
Then
\begin{eqnarray*}
&&\frac{\partial}{\partial t}M_t(z) = 
-z\int_{}\int_{\T^2} \frac{2y}{(y-z)^2}\frac{x+z}{x-z} + 
\frac{2x}{(x-z)^2}\frac{y+z}{y-z} \,
\til_t(dx) \meu_t(dy)\\ 
&=&- z M_t  \cdot \frac{\partial}{\partial z}(\mathcal{L} \circ M_t)- z \frac{\partial}{\partial z}M_t \cdot (\mathcal{L} \circ M_t).
\end{eqnarray*}

Furthermore, let $g_t$ be the solution to 
$$\frac{d}{dt}g_t = g_t(\mathcal{L}\circ M_t)(g_t), \quad g_0(z)=z\in\D.$$
Fix some $t\geq 0$. 
  The canonical mapping $\mathcal{M}(t) \ni \meu_s \mapsto \meu_s\lambda(ds) \in \mathcal{N}(t)$ is continuous. Hence, the Continuous Mapping Theorem (see \cite{MR1700749}, p. 20) implies that $\meu_{N_k,s} \lambda(ds)$ converges in distribution with respect to weak convergence to $\meu_s \lambda(ds)$. \\
 
Hence, Theorem \ref{control} and again the Continuous Mapping Theorem imply that $g_{N_k,t}$, which is the solution to \eqref{Poma},  converges in distribution to $g_t$ with respect to locally uniform convergence.
\end{proof}

\begin{remark}\label{Napoli} The convergence of $\meu_{N,t}$ and $g_{N,t}$ would follow
immediately if we knew that equation \eqref{abstractBurgers}, or equivalently \eqref{mck:1},
had a unique solution. If $\lambda_{N,k}=\frac1{N}$, then \eqref{abstractBurgers} is a standard 
PDE and uniqueness can be shown easily, see Section \ref{Sec_Sim}.
\end{remark}

\subsection{The simultaneous case}\label{Sec_Sim}

\begin{theorem}\label{sim_thm}Let $\lambda_{N,k}=\frac1{N}$ for all $k$ and $N$. 
Assume $\mu_{N,0}$ converges weakly to a probability measure $\mu.$ Then $\mu_{N,t}$ converges to the
deterministic process $\mu_t$ defined as the unique solution to the equation
\begin{equation}\label{mck:1_sim}  \frac{d}{dt}\left(\int_\T f(x)\, \til_t(dx)\right) =  
-\int_{}\int_{\T^2} \frac{xf'(x) - yf'(y)}{x-y}(x+y) \,
\til_t(dx) \til_t(dy), \quad \til_0=\til, \end{equation} 
for all $f\in C^2(\T,\C).$\\
The conformal mappings $g_{N,t}$ converge locally uniformly to $g_t,$
the solution of the system 
\begin{equation}\label{ord_sim}
\frac{\partial}{\partial t}g_{t}(z) =  g_{t}(z) M_t(g_t(z)),\quad g_{0}(z)=z,
\end{equation}
\begin{equation}\label{sys_sim}
\frac{\partial}{\partial t}M_t(z)= -2z \frac{\partial M_t(z)}{\partial z}M_t(z),\quad 
M_0(z) = \int_\T \frac{x+z}{x-z}\mu(dx).
\end{equation}
\end{theorem}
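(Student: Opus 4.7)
The plan is to combine the tightness and subsequential-convergence results of Theorem \ref{thm:1} and Corollary \ref{thm:2} with a uniqueness statement for the limiting PDE in the simultaneous case.

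First, I would verify the hypotheses of Theorem \ref{thm:1} in the present setting. With $\lambda_{N,k} = 1/N$, condition \eqref{max} holds trivially with $C = 1$; the interpolated cumulative weight satisfies $L_N(x) = x$ for every $N$, so \eqref{imp:2} holds with $L(x) = x$; and since in this case $\til_{N,t}$ and $\meu_{N,t}$ coincide, assumption \eqref{assum2} is precisely the hypothesis $\mu_{N,0} \to \mu$. Theorem \ref{thm:1} then gives tightness of $\{\til_{N,t}\}_N$, and any subsequential limit $\til_t$ satisfies \eqref{mck:1} with $\meu_t$ equal to $\til_t$, which is exactly \eqref{mck:1_sim}. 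Since $\mathcal{L}$ reduces to the identity map here, Corollary \ref{thm:2} yields that $g_{N,t}$ converges subsequentially to $g_t$ solving \eqref{ord_sim}, and \eqref{abstractBurgers} collapses to the complex Burgers equation \eqref{sys_sim}.

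The decisive step is uniqueness of \eqref{sys_sim} within the Herglotz class. My approach is the method of characteristics. The characteristic ODE associated to \eqref{sys_sim} reads $\dot{\phi}_t(z) = 2 \phi_t(z) M_t(\phi_t(z))$, along which $M_t \circ \phi_t$ is conserved, so any solution must satisfy the implicit identity
\begin{equation*}
M_t\bigl(z \exp(2t M_0(z))\bigr) = M_0(z), \qquad z \in \D.
\end{equation*}
Since $M_0$ is holomorphic on $\D$ with $M_0(0) \in RH$, the map $\phi_t(z) = z \exp(2t M_0(z))$ is a biholomorphism on a neighbourhood of the origin for every $t \geq 0$ (its derivative at $0$ equals $\exp(2t M_0(0)) \neq 0$). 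The implicit identity therefore pins down the Taylor coefficients of $M_t$ at $0$, and by analytic continuation $M_t$ is uniquely determined on all of $\D$.

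Once uniqueness is established, the tightness of $\{\til_{N,t}\}_N$ upgrades to convergence of the whole sequence in distribution to the deterministic limit $\til_t$; because this limit is deterministic, the convergence holds in probability. The locally uniform convergence $g_{N,t} \to g_t$ then follows from Theorem \ref{control} and the Continuous Mapping Theorem exactly as in the proof of Corollary \ref{thm:2}. The principal technical difficulty I anticipate is a clean globalization of the characteristic argument: since $\Re M_0 > 0$ on $\D$, the map $\phi_t$ strictly expands moduli and need not map $\D$ into $\D$, so showing that the implicit identity determines $M_t$ on all of $\D$ (rather than only in a neighbourhood of $0$) may require iteratively restarting the characteristic flow on successive small time intervals, or alternatively a Cauchy--Kovalevskaya-type argument applied directly to the Taylor coefficients of $M_t$.
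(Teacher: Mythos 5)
Your reduction to Theorem \ref{thm:1} and Corollary \ref{thm:2} (with $L=\mathrm{id}$, $\mathcal{L}=\mathrm{id}$, $\alpha_{N,t}=\mu_{N,t}$) and the overall logic -- tightness plus uniqueness of the limit equation yields full convergence to a deterministic limit -- are exactly the paper's. Where you genuinely diverge is the uniqueness step. The paper does not argue on $M_t$ directly: it defers to Theorem \ref{at_most}, which treats the more general equation $\partial_t M_t=-zS(M_t)\partial_z M_t$ for arbitrary holomorphic $S:RH\to RH$, passes to $\eta_t=\frac{1-M_t}{1+M_t}$ (so that $\eta_t:\D\to\D$, $\eta_t(0)=0$), and shows that the Taylor coefficients $a_n(t)$ of $\eta_t$ satisfy a triangular system of linear ODEs with unique solutions; this simultaneously proves existence, uniqueness, and the subordination/free-convolution structure $M_t=M_0\circ\eta_{\nu_t}$ needed in Section \ref{sec_burg_loew}. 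Your method of characteristics is a valid alternative for the uniqueness claim: $M_t(0)=1$ forces $M_t$ into the normalized Herglotz class, so the family is locally uniformly bounded and Lipschitz, the characteristic ODE is locally solvable, and for each fixed $t$ the identity $M_t\bigl(ze^{2tM_0(z)}\bigr)=M_0(z)$ holds on a small disc around $0$ (where $|z|e^{2t\Re M_0(z)}<1$); since any two solutions then agree near $0$ and are holomorphic on all of $\D$, the identity theorem finishes the job. Your closing worry about ``globalization'' is therefore unfounded -- you do not need the implicit identity to hold on all of $\D$, only near $0$, as you yourself note earlier -- and no restarting of the flow or Cauchy--Kovalevskaya argument is required. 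It is worth observing that your characteristic map $z\mapsto z\exp(2tM_0(z))$ is precisely the paper's $\eta_{\nu_t}^{-1}$, so the two arguments are two faces of the same computation; what the paper's version buys is the general $S$ and the free-probability interpretation, while yours is more self-contained for the specific Burgers case.
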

\begin{proof}
 Clearly, \eqref{mck:1_sim} and \eqref{sys_sim} are the special cases of \eqref{mck:1} and 
 \eqref{abstractBurgers}. It only remains to show that equation \eqref{sys_sim} (and hence also equation 
\eqref{mck:1_sim}) has a unique solution. 
 This is proven in Theorem \ref{at_most} (for an even more general situation).

\end{proof}

Equation \eqref{sys_sim} is in fact the (inviscid) Burgers' equation, which becomes clear after a change of variables.
For $\Im(z)>0$ define $V_t(z)=-iM_t(e^{iz})$. Then $V_t$ maps the upper half-plane into the 
 lower half-plane and we obtain 
\begin{equation}\label{Burg}
 \frac{\partial V_t(z)}{\partial t}= -2 \frac{\partial V_t(z)}{\partial z}V_t(z). 
\end{equation}

\begin{remark}
Let $h_t:\Ha\to\Ha$ be a lift of $g_t(e^{iz}):\Ha\to\D\setminus\{0\}$ with respect to $e^{iz}:\Ha\to\D\setminus\{0\}$, i.e. $g_t(e^{iz})=e^{ih_t(z)}$.
Suppose that $t\mapsto h_t$ is continuous and $h_0(z)\equiv z$. Locally we can write $h_t(z) = -i\log(g_t(e^{iz}))$ and we obtain the differential 
equation
\begin{equation}\label{Burg2}
 \frac{\partial h_t(z)}{\partial t}= V_t(h_t(z)), \quad h_0(z)=z. 
\end{equation}
The system \eqref{Burg}, \eqref{Burg2} is now the same as in \cite[Theorem 1.1]{delMonacoSchleissinger:2016}, 
the only difference being that the 
initial value for $V_0$ is not a Cauchy transform in our setting, but of the form 
\begin{equation}\label{alt}V_0(z)=-i\int_0^{2\pi} \frac{e^{is}+e^{iz}}{e^{is}-e^{iz}}\mu'(ds)\end{equation}
 for a probability measure $\mu'$ on $[0,2\pi]$.
\end{remark}

\begin{example}
 Let $\mu$ be the uniform distribution on $\T$. Then $M_t(z)=1$ solves \eqref{sys_sim} 
and the conformal mapping $g_t$ is given by $g_t(z)=e^{t}z$, 
which maps $e^{-t}\D$ conformally onto $\D$. 
\end{example}

\begin{example}
We can now determine the limit hulls of the curves in Figure 1, where $\lambda_{N,k}=1/N$ and $\mu_{N,0}\to\delta_1$. (The corresponding chordal case is considered in \cite{HK18}.)
A numerical solution of the system \eqref{ord_sim}, \eqref{sys_sim} with $M_0(z)=\frac{1+z}{1-z}$ yields:
\end{example}

\begin{figure}[ht]
\rule{0pt}{0pt}
\centering
\includegraphics[width=12cm]{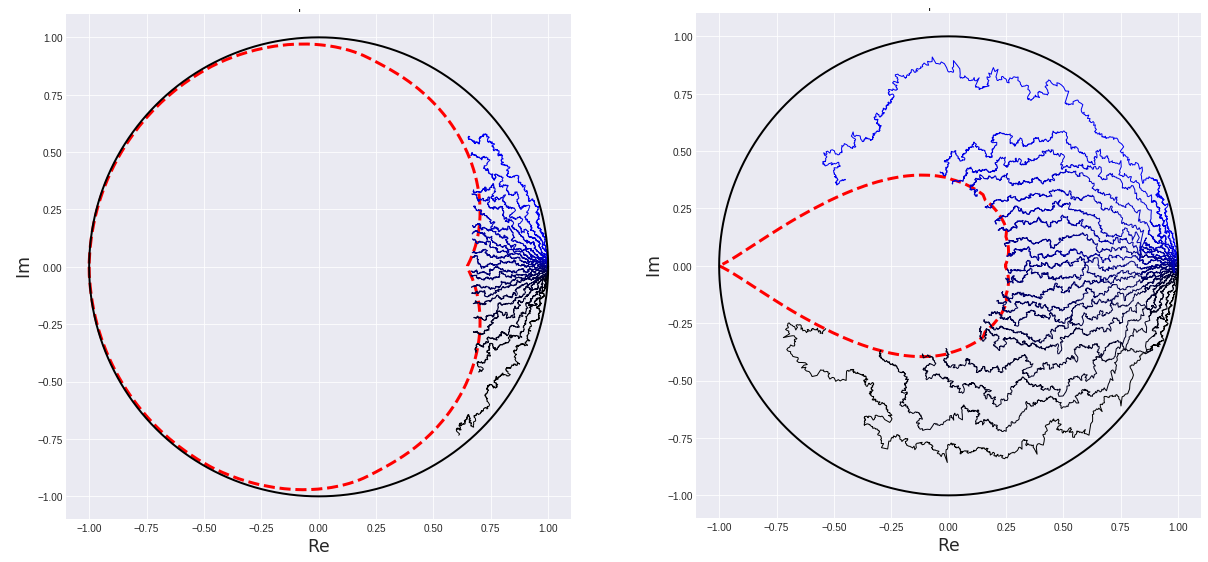}
\caption{The SLE curves from Figure 1 and the boundary of their limit hulls ({\color{red}red}, dashed).}
\end{figure}

Recall that every $M_t$ has the form
\begin{equation*}
M_t(z)=\int_\T \frac{x+z}{x-z}\mu_t(dx).
\end{equation*}

\begin{theorem}\label{thm_geo}
There exists $T>0$ such that $M_t$ and $g_t^{-1}$ both extend analytically to 
$\overline{\D}$, $\Re(M_t(x))>0$ for all $x\in\T$, and $\supp \mu_t=\T$ for all $t\geq T$.
\end{theorem}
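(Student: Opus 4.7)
The plan is to exploit the probabilistic interpretation of $\mu_t$ developed in Section \ref{sec_burg_loew}, namely that $\mu_t$ is the spectral distribution of a free unitary Brownian motion started with distribution $\mu$, so that $\mu_t = \mu \boxtimes \sigma_t$, where $\boxtimes$ denotes free multiplicative convolution on $\T$ and $\sigma_t$ is the standard free unitary Brownian motion distribution (starting from $\delta_1$). By Biane's regularity theorem, once $t$ exceeds a critical threshold (equal to $4$ in the canonical normalization), $\sigma_t$ possesses a strictly positive real-analytic density on $\T$. This regularity is inherited by $\mu \boxtimes \sigma_t$, so there exists $T = T(\mu) > 0$ such that for every $t \geq T$ the measure $\mu_t$ has a strictly positive real-analytic density $\rho_t$ on $\T$, which immediately yields $\supp \mu_t = \T$.

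Once $\rho_t$ is real-analytic and strictly positive, the analyticity of $M_t$ across $\T$ and the positivity $\Re M_t(x) > 0$ on $\T$ follow from standard facts about Cauchy-type integrals. Indeed, the Herglotz integral
\begin{equation*}
M_t(z) = \int_\T \frac{x+z}{x-z} \rho_t(x) \, |dx|
\end{equation*}
extends analytically across $\T$ to some neighborhood of $\overline{\D}$ by the reflection principle for Cauchy integrals with real-analytic densities, and the non-tangential boundary values of $\Re M_t$ on $\T$ equal $2\pi \rho_t > 0$.

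To extend $g_t^{-1}$ analytically across $\T$, fix $t \geq T$ and use the backward Loewner flow \eqref{Loe_DE_rev}: let $h_s$ solve $\partial_s h_s(z) = -h_s(z) M_{t-s}(h_s(z))$ with $h_0(z) = z$, so that $h_t = g_t^{-1}$. On the time interval $s \in [0, t-T]$ we have $t-s \geq T$, and by continuity of $r \mapsto M_r$ on the compact interval $[T, t]$ there is a uniform neighborhood $U$ of $\overline{\D}$ on which every $M_r$, $r \in [T, t]$, is analytic; moreover the outward radial component of the vector field $-w M_{t-s}(w)$ on $\T$ equals $-\Re M_{t-s}(w) < 0$. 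Standard ODE arguments then extend $h_s$ analytically to some neighborhood of $\overline{\D}$ and produce a (possibly smaller) neighborhood $U' \subset U$ of $\overline{\D}$ and some $\eps > 0$ with $h_{t-T}(\overline{U'}) \subset \{|w| \leq 1-\eps\} \subset \D$. For $s \in [t-T, t]$, although $M_{t-s}$ may fail to extend past $\T$, the Herglotz property $\Re M_{t-s} \geq 0$ on $\D$ yields
\begin{equation*}
\frac{d}{ds}|h_s(z)|^2 \,=\, -2|h_s(z)|^2 \, \Re M_{t-s}(h_s(z)) \,\leq\, 0,
\end{equation*}
so $h_s(U')$ stays in a compact subset of $\D$ and the ODE continues to define $h_s$ analytically on $U'$. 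In particular, $g_t^{-1} = h_t$ extends analytically to $U' \supset \overline{\D}$.

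The substantive input is the first step: identifying $\mu_t$ as a free unitary Brownian motion and invoking Biane's regularity theorem together with its preservation under free multiplicative convolution with an arbitrary initial measure $\mu$ on $\T$. The subsequent complex-analytic and Loewner-theoretic arguments extending $M_t$ and $g_t^{-1}$ across $\T$ are comparatively standard.
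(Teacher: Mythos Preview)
Your identification $\mu_t = \mu \boxtimes \sigma_t$ is correct (one checks $\Sigma_{\mu_t}(w) = \Sigma_\mu(w)\exp(2t\tfrac{1+w}{1-w}) = \Sigma_\mu(w)\Sigma_{\sigma_t}(w)$ via the relation $M_0\circ\eta_\mu^{-1}(w) = \tfrac{1+w}{1-w}$), and the backward-flow argument for $g_t^{-1}$ is essentially the paper's own, only phrased more explicitly. The difference lies in how the regularity of $M_t$ is obtained.

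The paper does \emph{not} invoke Biane's description of $\sigma_t$. Instead it appeals to Theorem~\ref{prob_prop}(b), whose proof is a direct method-of-characteristics argument for the Burgers equation: the characteristic lines $z(t)=z_0 - U(V_0(z_0))\,t$ have strictly negative imaginary velocity, so every point of the horizontal line $\Im z_0 = 1$ reaches $\R$ by a finite time $T$, and thereafter $V_t$ (hence $M_t$) extends analytically across the boundary with $\Re M_t>0$ there. This is self-contained and works for \emph{any} initial $M_0$ and any $S:RH\to RH$.

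Your route, by contrast, hinges on the assertion ``this regularity is inherited by $\mu\boxtimes\sigma_t$''. That sentence is the gap. Knowing that $\sigma_t$ has a strictly positive real-analytic density does not by itself force $\mu\boxtimes\sigma_t$ to have one: the relevant subordination function $\omega_t$ in $\eta_{\mu_t}=\eta_\mu\circ\omega_t$ depends on $\mu$, not only on $\sigma_t$, and it is precisely the analytic extension of $\omega_t$ across $\T$ (with image in $\D$) that must be established. Results in this direction do exist (e.g.\ work of Zhong and of Belinschi--Bercovici on densities of free multiplicative convolutions, and material in \cite{AWZ14}), but you have not cited one, and the statement is not a triviality. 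In effect, proving your inheritance claim from scratch amounts to reproving Theorem~\ref{prob_prop}(b) for the special case $S(z)=2z$ --- which is exactly what the paper does, by characteristics, and for general $S$ at no extra cost. A secondary point: your ``uniform neighbourhood $U$'' claim for $M_r$, $r\in[T,t]$, also needs an argument; pointwise analytic extension plus continuity in $r$ does not automatically give a common domain. The paper sidesteps this by writing $f_{T_2}=\varphi\circ f_{t_0}$ with $\varphi:\D\to\D$ holomorphic, which avoids any uniformity issue.
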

\begin{proof}
Due to Theorem \ref{prob_prop}, 
 there exists $T>0$ such that, for all $t\geq T$, $M_t$ extends analytically to $\overline{\D}$ 
with $\Re(M_t(x))>0$ for all $x\in \T$ and $\supp \mu_t=\T$.  \\

Fix any $T_2>T$.
Instead of \eqref{ord_sim}, we consider the time-reverse equation (cf. equation \eqref{Loe_DE_rev})
\begin{equation*}
\frac{\partial}{\partial t}f_{t}(z) =  -f_{t}(z) M_{T_2-t}(f_t(z)),\quad t\in[0,T_2], \quad 
f_{0}(z)=z.
\end{equation*}

Recall that we have $f_{T_2}=g_{T_2}^{-1}$.\\
For $t\in[0,T_2-T]$, the above equation can also be considered on $\T$ and we see that 
$f_{t}$ can be extended analytically to $\overline{\D}$ with $f_t(\T)\subset \D$ for all $t\in (0,T_2-T]$.
Fix $t_0\in(0,T_2-T]$. As $f_{T_2}=\varphi\circ f_{t_0}$ for some holomorphic 
$\varphi:\D\to\D$, we see that $f_{T_2}$ can also be extended analytically to $\overline{\D}$ 
with $f_{T_2}(\T)\subset \D$.

\end{proof}

\begin{remark}
Assume that $\supp \mu_0\not=\T$. 
Then we can compute the smallest time $t$ with $\supp \mu_t=\T$ explicitly.\\
Recall formula \eqref{alt}. Let $x\in \R$ with $x+2k\pi \not \in \supp \mu'$ for all $k\in\Z$. Then $V_0$ can be extended analytically to a neighbourhood of $x$ and
\begin{equation*}
V_0(x) = \int_0^{2\pi} \frac{2\sin(x-s)}{|e^{is}-e^{ix}|^2} \mu'(ds)=
\int_0^{2\pi} \frac{2\sin(x-s)}{2-2\cos(s-x)} \mu'(ds).
\end{equation*}
We obtain
\begin{equation*}
 V_0'(x) = \int_0^{2\pi} \frac{1}{\cos(s-x)-1} \mu'(ds)<0.
\end{equation*}
The theory of the real (inviscid) Burgers equation, see \cite[p. 77, 78]{Mil06}, implies that
 $\exp(i x)$ belongs to $\supp \mu_t$ the first time at $t=T(x)=\frac{-1}{2V_0'(x)}$.
Now put $m=\min_{x\in\R}|V'_0(x)|>0$. Then $T(x)\leq \frac{1}{2m}$ and we see that 
$\supp \mu_t=\T$ the first time for $t=\frac{1}{2m}$.
\end{remark}

\begin{example}
Let $\mu_0=\delta_1$, i.e. $M_0(z)=\frac{1+z}{1-z}$. 
In this case we can choose $\mu'=\delta_0$ and we have 
\begin{equation*}
V_0(x) = \frac{\sin(x)}{1-\cos(x)}, \quad V_0'(x) = \frac{1}{\cos(x)-1}.
\end{equation*}
Thus $m=\frac1{2}$ and $T=\frac1{2m}=1$ is the smallest time with $\supp \mu_T=\T$.
\end{example}

\begin{figure}[h]
\rule{0pt}{0pt}
\centering
\includegraphics[width=6cm]{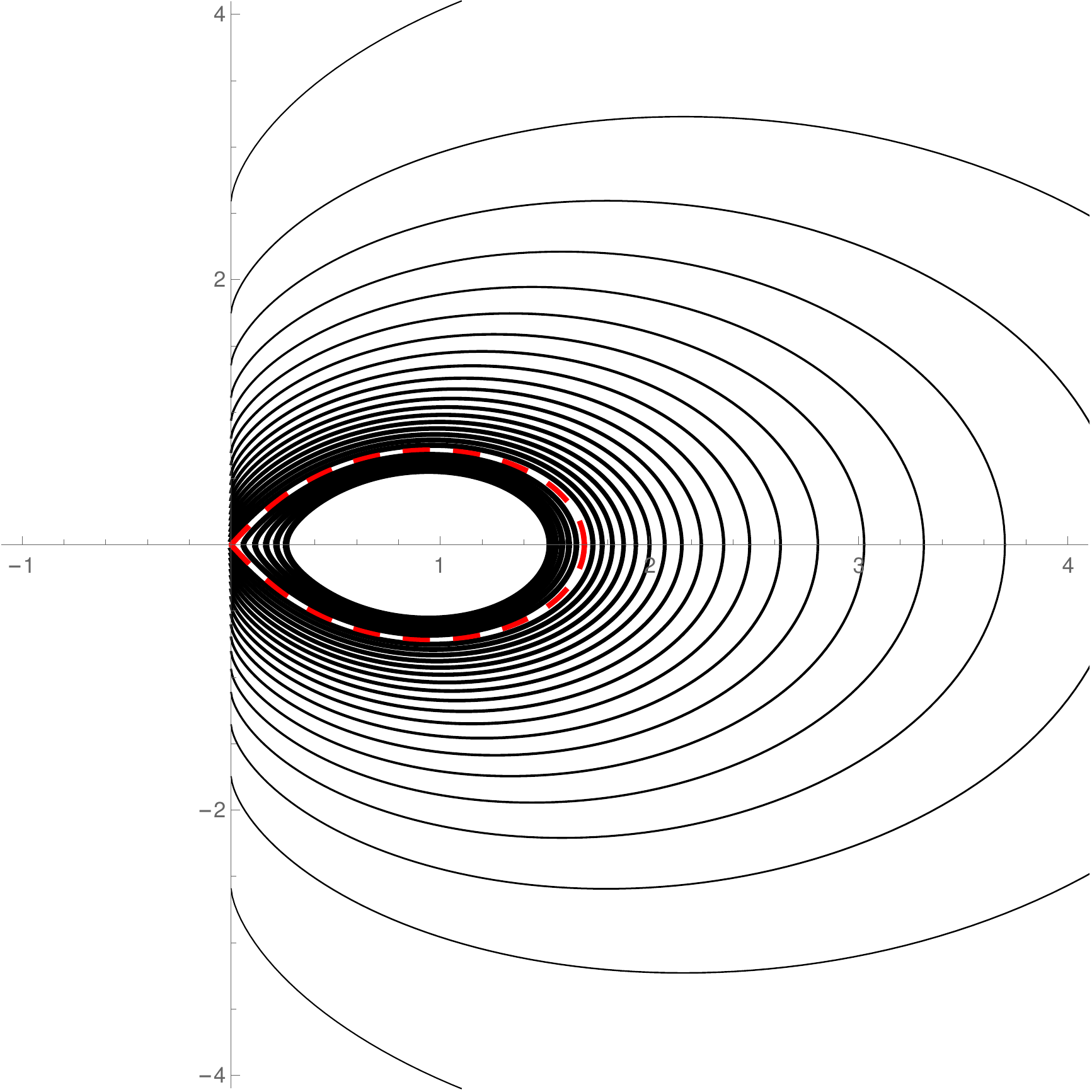}
\hspace{2cm}
\includegraphics[width=6cm]{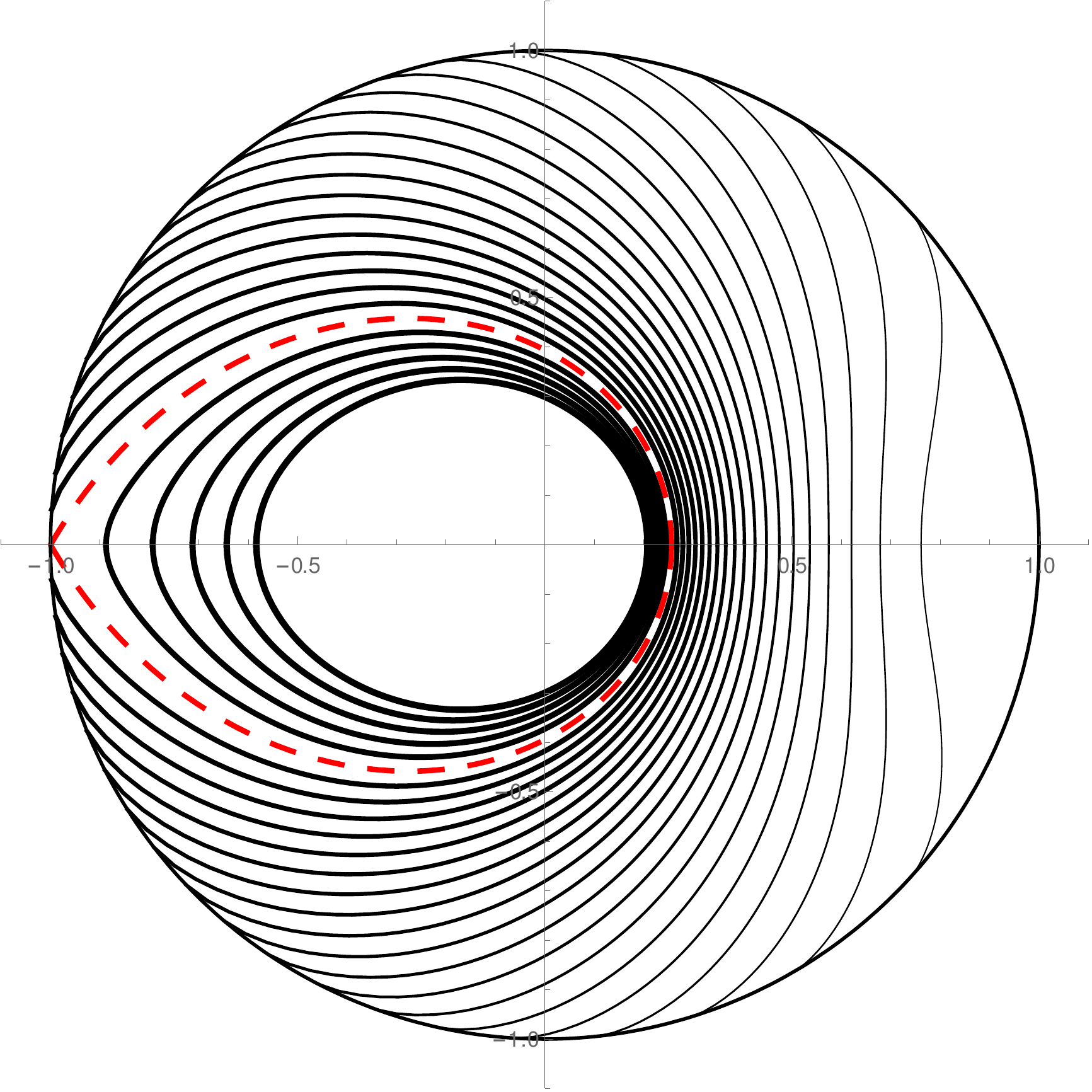}
\caption{Left: The boundary of $M_t(\D)$ for $t=0.05\cdot n$, $n=1,...,25$. 
Right: The mappings $M_0^{-1}\circ M_t$. The {\color{red}red}, dashed curves correspond to $t=1$. }
\label{fig1}
\end{figure}

Finally, we note that the simultaneous case can also be handled by switching the setting to the upper half-plane 
where the Loewner equation becomes quite similar to the one describing the evolution of trajectories of certain quadratic differentials.
Then one can use a result from \cite{MR3764710} to obtain the limit equation in the upper half-plane,
see Appendix \ref{A2}. 

\section{A general Burgers-Loewner equation}\label{sec_burg_loew}

Let $\alpha$ be a probability measure on $\T$ and let $S$ be a holomorphic mapping from the 
right half-plane $RH$ into itself.

The differential equation \eqref{sys_sim} is a special case of the following partial differential equation

\begin{equation}\label{BurLoe} 
\frac{\partial}{\partial t}M_t(z) = -z S(M_t(z)) \cdot \frac{\partial}{\partial z}M_t(z), 
\qquad M_0(z) = \int_\T \frac{x+z}{x-z}\alpha(dx).
\end{equation}

This equation can be interpreted in at least three different ways:

\begin{itemize}
\item[(A)] Each $M_t$ can be written as $M_t(z) = \int_\T \frac{x+z}{x-z}\mu_t(dx)$ for some 
	probability measure $\mu_t$ on $\T$. These measures are related to a semigroup with 
	respect to 
	\emph{free convolution}. \\
	In Section \ref{sec_heat}, we explain this view in more detail. In particular,
	we see that \eqref{BurLoe} always has a unique solution. 
\item[(B)] As in Section \ref{Sec_Sim}, the solution $\{M_t\}$ can be seen as a Herglotz vector field for 
the Loewner equation
	\[\frac{\partial}{\partial t}g_{t}(z) =  g_{t}(z) M_t(g_t(z)),\quad g_{0}(z)=z.\]
	The case $S_\mu(z)=2z$ describes the infinite-slit limit of radial multiple SLE with equal weights. The measures from (A) in this special case describe the distributions of a (time changed) free unitary Brownian motion, see \cite{AWZ14}. (In particular, see \cite[p. 3490]{AWZ14},
        which gives the $\Sigma$-transforms of these measures. The $\Sigma$-transform is defined below.)
\item[(C)] Equation \eqref{BurLoe} can also be regarded as a special case of Loewner's partial differential equation.
In case $M_0$ is univalent, $\{M_t\}_{t\geq0}$ is a decreasing Loewner chain. In general, the solution is 
a decreasing subordination chain. 
	
\end{itemize}

\subsection{Free semigroups}

Let $\mu$ be a probability measure on $\T$. 
The moment generating function is a holomorphic function on $\D$ defined by
\begin{equation*}
\psi_\mu(z)=\int_{\T}\frac{xz}{1-xz}\, \mu({\rm d}x)=\sum_{n=1}^\infty 
\left(\int_\T x^n \, \mu({\rm d}x)\right) z^n, \qquad z \in \D.
\end{equation*}

The classical independence of random variables leads to the classical convolution, 
or Hadamard convolution, $\mu \star \nu$, with $\psi_{\mu \star \nu}=
\sum_{n=1}^\infty 
\left(\int_\T x^n \, \mu({\rm d}x)\right)
\left(\int_\T x^n \, \nu({\rm d}x)\right) z^n$.
Other notions of independence from non-commutative probability theory lead to further 
convolutions. First, we need to define the $\eta$-transform of $\mu$. Let 
\begin{equation*}
\eta_\mu(z)=\frac{\psi_\mu(z)}{1+\psi_\mu(z)}, \qquad z \in \D.
\end{equation*}

\begin{lemma}[See Proposition 3.2 in \cite{BB05}]
Let $\psi\colon\D \to \C$ be holomorphic. The following conditions are
equivalent.
\begin{enumerate}
\item[(1)] There exists a probability measure $\mu$ on $\T$ such that $\psi=\psi_\mu$.
\item[(2)]$\psi(0)=0$ and $\Re(\psi(z)) \geq -\frac{1}{2}$ for all $z\in \D$.
\end{enumerate}
Let $\eta\colon\D \to \C$ be holomorphic. The following conditions are
equivalent.
\begin{enumerate}
\item[(3)] There exists a probability measure $\mu$ on $\T$ such that $\eta=\eta_\mu$.
\item[(4)] $\eta(0)=0$ and $\eta$ maps $\D$ into $\D$.
\end{enumerate}
\end{lemma}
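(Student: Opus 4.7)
The plan is to reduce the first equivalence to the classical Herglotz representation theorem and to bridge the two equivalences with a single Möbius transformation.

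First I would rewrite $\psi_\mu$ in Herglotz form. Using the identity $\frac{xz}{1-xz}=\tfrac12\bigl(\tfrac{1+xz}{1-xz}-1\bigr)$, we get
\[
2\psi_\mu(z)+1=\int_\T \frac{1+xz}{1-xz}\,\mu(dx).
\]
The right hand side is a holomorphic function on $\D$, equal to $1$ at $z=0$, with positive real part; this immediately yields (1)$\Rightarrow$(2). For the converse, given $\psi$ satisfying (2), set $F(z):=2\psi(z)+1$. Then $F$ is holomorphic on $\D$ with $F(0)=1$ and $\Re F\geq 0$. By the Herglotz representation theorem there exists a unique probability measure $\mu$ on $\T$ with $F(z)=\int_\T \frac{1+xz}{1-xz}\mu(dx)$; expanding this as a power series and comparing with the definition of $\psi_\mu$ identifies $\psi=\psi_\mu$, proving (2)$\Rightarrow$(1).

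Next I would handle the $\eta$-equivalence via the Möbius map $\phi(w)=w/(1+w)$. A direct computation with $|w|^2$ vs.\ $|1+w|^2$ shows that $|\phi(w)|\leq 1$ if and only if $\Re w\geq -\tfrac12$, with equality on the boundary in both cases. Hence for $\psi$ satisfying (2), $\eta:=\psi/(1+\psi)$ is holomorphic on $\D$, vanishes at $0$, and maps into $\overline\D$; the maximum modulus principle then promotes this to $\eta(\D)\subset\D$ (since $\eta$ is non-constant of modulus $1$ only if it is identically $e^{i\theta}$, which is excluded by $\eta(0)=0$). Conversely, given $\eta$ as in (4), the function $\psi:=\eta/(1-\eta)$ is well-defined and holomorphic on $\D$, satisfies $\psi(0)=0$, and the same Möbius computation gives $\Re\psi\geq -\tfrac12$ so that (2) holds. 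Combining with the first equivalence yields (3)$\iff$(4), and the formula $\eta=\eta_\mu$ is consistent because $\eta_\mu=\psi_\mu/(1+\psi_\mu)$ by definition.

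The only subtle point is the treatment of the boundary case $\Re F\equiv 0$ on $\D$, equivalently $F\equiv 1$: this corresponds to $\psi\equiv 0$, which in turn corresponds to the normalized Haar measure on $\T$ (whose moments all vanish), and on the $\eta$-side it corresponds to $\eta\equiv 0$. These degenerate cases are handled by recording that the Herglotz theorem still delivers a probability measure when $\Re F\geq 0$ (not merely $>0$), and by invoking the maximum principle on the $\eta$-side; both are routine but should be noted explicitly.
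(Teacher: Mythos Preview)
Your argument is correct. The paper itself does not give a proof of this lemma; it merely quotes the result with a pointer to \cite[Proposition~3.2]{BB05}. What you have written is precisely the standard derivation: rewrite $2\psi_\mu+1$ as a Herglotz integral to get (1)$\Leftrightarrow$(2), and then transport the half-plane condition to the disc via the M\"obius map $w\mapsto w/(1+w)$ to get (3)$\Leftrightarrow$(4). The boundary/degenerate cases you flag (Haar measure giving $\psi\equiv0$, and the maximum-modulus step to exclude $|\eta|=1$ at an interior point) are exactly the right checks. One cosmetic point: the sentence ``$\eta$ is non-constant of modulus $1$ only if it is identically $e^{i\theta}$'' is garbled; just say that if $|\eta(z_0)|=1$ for some $z_0\in\D$ then $\eta$ is constant by the maximum modulus principle, which contradicts $\eta(0)=0$.
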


Recall that we denote by $\cP(\T)$ the set of all 
probability measures $\mu$ on $\T$ (Definition \ref{definition3.5}). We let $\cP_{\times}(\T)$ be the set of all $\mu\in \cP(\T)$ with $\eta_\mu'(0)\not=0$,
i.e. the first moment of $\mu$ is $\not=0$. Then 
we can invert $\eta_\mu$ in a neighbourhood of $0$. Denote this locally defined function by 
$\eta_\mu^{-1}$. (For the inversion with respect to multiplication, we will always write $\frac{1}{\eta_\mu}$.)  The $\Sigma$-transform of $\mu$ is defined by 
\begin{equation*}
\Sigma_\mu(z)=\frac{1}{z}\eta_\mu^{-1}(z).
\end{equation*}

For two probability measures $\mu,\nu\in\cP_{\times}(\T)$, Voiculescu \cite{Voi87} characterized
\emph{multiplicative free convolution} $\boxtimes$ by
\begin{equation}\label{FM}
\Sigma_{\mu \boxtimes \nu}(z) = \Sigma_{\mu}(z) \Sigma_{\nu}(z)
\end{equation}
in a neighborhood of $0$. The multiplicative free convolution arises from the notion of free independence 
of unitary operators. An introduction to free probability theory can be found 
in \cite{ns06}.

\begin{remark}
Another notion of independence of unitary operators, the so called 
monotone independence, leads to the 
multiplicative monotone convolution $\mu\rhd \nu$ defined by the composition of 
 the $\eta$-transforms, i.e. $\eta_{\mu\rhd \nu} = \eta_\mu \circ \eta_\nu$.
Monotone independence is closely related to univalent functions. The following two statements are equivalent:
\begin{enumerate}[\rm(1)]
	\item$\eta_\mu$ is univalent on $\D$.
	\item There exists a quantum process $\{X_t\}_{t\geq0}$ of 
	unitary operators with monotonically independent increments such that 
	$\mu$ is the distribution of $X_1$. 
\end{enumerate}
The precise definitions of the ``distribution of an operator'', ``quantum processes'' and ``monotone independence'' can be found in 
\cite{FHS}. In particular, the proof of the above statement is given in 
\cite[Sections 5.1-5.4
]{FHS}.
\end{remark}

$\mu\in\cP_{\times}(\T)$ is called \emph{(freely) infinitely divisible} 
if for every $n\in\N$ there exists $\mu_n \in \cP_{\times}(\T)$ such that 
$\mu = \mu_n \boxtimes \cdots \boxtimes \mu_n$ ($n$-fold convolution).
Freely infinitely divisible distributions can be characterized in the following way.

\begin{theorem}[See Lemma 6.6 in \cite{BV92}]\label{MFID}
Let $\mu\in\cP_{\times}(\T)$. The following three statements are equivalent.
\begin{enumerate}[\rm(1)]
\item $\mu$ is freely infinitely divisible.
\item\label{CSUMFI} There exists a weakly continuous $\boxtimes$-convolution semigroup $\{\mu_t \}_{t\geq 0}$ (i.e. $\mu_{t+s}=\mu_t \boxtimes \mu_s$ for all $s,t\geq0$ and $t\mapsto\mu_t$ is continuous with respect to weak convergence) such that $\mu_0 = \delta_1$ and $\mu_1 = \mu$.
\item\label{UMFI} There exists an analytic map $u_\mu\colon\D \to\C$ with $\Re(u_\mu)\geq 0$  such that $\Sigma_\mu(z) = \exp(u_\mu(z))$.
\end{enumerate}
Moreover, the analytic map $u_\mu$ in \eqref{UMFI} can be characterized by the Herglotz representation
\begin{equation}\label{VectorFUMFI}
u_\mu(z) = -i \alpha +\int_{\T} \frac{1+ z \zeta}{1-z\zeta}\rho(d \zeta),
\end{equation}
where $\alpha \in \R$ and $\rho$ is a finite, non-negative measure on $\T$.

Conversely, for any analytic map $u\colon\D \to\C$  with $\Re(u)\geq 0$,  the function $\exp(u(z))$ is the $\Sigma$-transform of some freely infinitely divisible $\mu$.
\end{theorem}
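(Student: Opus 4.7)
The plan is to follow the approach of Bercovici--Voiculescu \cite{BV92}, exploiting the fact that the $\Sigma$-transform linearizes $\boxtimes$ by (\ref{FM}) to reduce the problem to an analytic statement about exponentials of Herglotz-type functions. The implication (2)$\Rightarrow$(1) is immediate: given a semigroup $\{\mu_t\}_{t\geq 0}$ with $\mu_1=\mu$, we have $\mu=\mu_{1/n}^{\boxtimes n}$ for every $n\in\N$, so $\mu$ is infinitely divisible.

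For (3)$\Rightarrow$(2), given $u_\mu$ with $\Re u_\mu \geq 0$, I would define $\mu_t$ by prescribing
\[ \Sigma_{\mu_t}(z) := \exp(t\, u_\mu(z)), \qquad t \geq 0. \]
The semigroup identity $\Sigma_{\mu_{s+t}}=\Sigma_{\mu_s}\Sigma_{\mu_t}$ is tautological from the exponential form, so by (\ref{FM}) we would obtain $\mu_{s+t}=\mu_s\boxtimes\mu_t$, provided we first verify that $\exp(t u_\mu)$ really is a $\Sigma$-transform. This amounts to checking that $z\mapsto z\exp(tu_\mu(z))$ admits a holomorphic inverse $\eta_{\mu_t}\colon\D\to\D$ with $\eta_{\mu_t}(0)=0$ and $\eta_{\mu_t}'(0)\neq 0$; then the preceding lemma identifies $\eta_{\mu_t}$ as an $\eta$-transform of some $\mu_t\in\cP_{\times}(\T)$. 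The key analytic input is that $\Re u_\mu\geq 0$ forces $|\exp(tu_\mu)|\geq 1$ on $\D$, which is exactly what makes $z\exp(tu_\mu(z))$ univalent near $0$ and forces its inverse to be a self-map of $\D$ (via Schwarz-type estimates). Weak continuity of $t\mapsto\mu_t$ then follows from locally uniform continuity of $t\mapsto\Sigma_{\mu_t}$ together with the continuity of the inversion $\Sigma\mapsto\mu$.

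For the hard direction (1)$\Rightarrow$(3), assume $\mu=\mu_n^{\boxtimes n}$ with $\mu_n\in\cP_{\times}(\T)$. By (\ref{FM}), $\Sigma_\mu(z)=\Sigma_{\mu_n}(z)^n$ in a neighbourhood of $0$. A preliminary fact (also proved in \cite{BV92}) is that for any $\nu\in\cP_{\times}(\T)$ the function $\Sigma_\nu$ extends holomorphically to all of $\D$ with values in $\{|w|\geq 1\}$; granting this, $\log\Sigma_{\mu_n}$ is a well-defined holomorphic function on $\D$ with $\Re\log\Sigma_{\mu_n}\geq 0$ (after the branch choice that makes $\log\Sigma_{\mu_n}(0)$ real). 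Setting $u_\mu := n\log\Sigma_{\mu_n}$, which is $n$-independent by the $n$-th root relation, gives a holomorphic map $\D\to\overline{RH}$ with $\Sigma_\mu=\exp(u_\mu)$. The Herglotz representation of $u_\mu$ is then the classical integral representation for holomorphic maps $\D\to\overline{RH}$: writing $iu_\mu$ as a map into the upper half plane and applying the Herglotz theorem, one obtains $u_\mu(z)=-i\alpha+\int_\T\tfrac{1+z\zeta}{1-z\zeta}\rho(d\zeta)$ for a real $\alpha$ and a finite non-negative $\rho$ on $\T$. The final converse statement in the theorem is just (3)$\Rightarrow$(2)$\Rightarrow$(1) read in reverse. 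The main obstacle is the preliminary global regularity of $\Sigma_\nu$ on $\D$ and the inversion of $z\exp(tu_\mu(z))$ to a self-map of $\D$; these require the subordination and Denjoy--Wolff arguments of \cite{BV92}, after which the rest of the proof is formal transform calculus.
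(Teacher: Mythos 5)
The paper does not actually prove Theorem \ref{MFID}: it is quoted from \cite[Lemma 6.6]{BV92}, so there is no internal proof to compare against. Judged on its own terms, your sketch of (2)$\Rightarrow$(1) and (3)$\Rightarrow$(2) is sound: defining $\Sigma_{\mu_t}=\exp(tu_\mu)$ and verifying that $z\mapsto z e^{tu_\mu(z)}$ admits a holomorphic inverse $\eta_{\mu_t}\colon\D\to\D$ via the unique fixed point of the strict holomorphic self-map $z\mapsto we^{-tu_\mu(z)}$ (Denjoy--Wolff / hyperbolic contraction) is the standard argument, and it is also the mechanism this paper itself exploits in the proof of Theorem \ref{at_most}.

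The direction (1)$\Rightarrow$(3) is where your argument breaks. The ``preliminary fact'' you invoke --- that for \emph{any} $\nu\in\cP_{\times}(\T)$ the function $\Sigma_\nu$ extends holomorphically to all of $\D$ with values in $\{|w|\geq 1\}$ --- is false; indeed, if it were true, your own argument applied with $n=1$ (i.e. $u_\mu:=\log\Sigma_\mu$) would show that \emph{every} $\mu\in\cP_{\times}(\T)$ satisfies (3) and hence is freely infinitely divisible, which is not the case. What is true is only that $|\Sigma_\nu|\geq 1$ on the natural domain $\eta_\nu(\D)$ (a Schwarz-lemma consequence); but $\eta_\nu$ need not be injective (any holomorphic self-map of $\D$ fixing $0$ with nonzero derivative is an $\eta$-transform, e.g. a degree-two Blaschke product), so $\eta_\nu^{-1}$, and with it $\Sigma_\nu$, has branch points inside $\D$ and does not extend. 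The global extension of $\log\Sigma_\mu$ to a function $\D\to\overline{RH}$ is precisely the content of statement (3) and cannot be taken as input. The actual proof in \cite{BV92} obtains (1)$\Rightarrow$(3) from a genuine limit theorem for the convolution roots $\mu_n$: one shows that suitably rotated $\mu_n$ converge to $\delta_1$, that the functions $n\log\Sigma_{\mu_n}$ are defined and form a normal family on a fixed neighbourhood of $0$ uniformly in $n$, extracts $u_\mu$ there, and only then extends to all of $\D$ via the semigroup. Two smaller slips: $n\log\Sigma_{\mu_n}$ is not literally $n$-independent (different $n$ may differ by elements of $2\pi i\Z$, though any single choice suffices), and $\Sigma_{\mu_n}(0)$ is in general not a positive real, so no branch makes $\log\Sigma_{\mu_n}(0)$ real.
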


\begin{remark}
 The multiplicative free convolution can also be considered for probability measures $\mu$
 with zero mean. Such a measure is infinitely divisible if and only if $\mu$ is the uniform distribution on 
 $\T$, i.e. $\psi_\mu(z)\equiv 0$, see \cite[Lemma 6.1]{BV92}.
\end{remark}

\subsection{Properties of the Burgers-Loewner equation}\label{sec_heat}

Let $\eta_t=\eta_{\mu_t}$, where $\{\mu_t\}$ is a semigroup as in \eqref{CSUMFI} 
of Theorem \ref{MFID}. Furthermore, assume that $u_\mu$ does not have the form $u_\mu(z)\equiv xi$ for some $x\in \R$. We exclude this simple case which only leads to 
simple rotations of point measures.
Then by \eqref{UMFI} of Theorem \ref{MFID}, we have \[\eta_t^{-1}(z) = z\exp(t u_\mu(z)).\]

This yields the differential equation
\begin{equation}
\frac{\partial}{\partial t}\eta_t(z) = -z u_\mu(\eta_t(z)) \cdot \frac{\partial}{\partial z}\eta_t(z).
\end{equation}

We put $M_t(z)=\frac{1+\eta_t(z)}{1-\eta_t(z)}=1+2\psi_{\mu_t}$. 
Then we obtain the partial differential equation
\begin{equation}\label{addd}
\frac{\partial}{\partial t}M_t(z) = -z S_\mu(M_t(z)) \cdot \frac{\partial}{\partial z}M_t(z), 
\quad M_0(z)\equiv z,
\end{equation}
with $S_\mu(z)=u_{\mu}(\frac{1-z}{1+z})$. 
Note that $S_\mu$ maps the right half-plane $RH$ holomorphically into itself. 
We now consider this equation, the Burgers-Loewner equation, with an arbitrary 
initial value.  

\begin{theorem}\label{at_most}Let $S:RH\to RH$ be holomorphic and $\alpha\in \cP(\T)$.
Then there exists exactly one solution $\{M_t\}_{t\geq0}$ of holomorphic mappings $M_t:\D\to RH$ of 
\begin{equation}\label{eq_heat}\frac{\partial}{\partial t}M_t(z) = -z S(M_t(z)) \cdot \frac{\partial}{\partial z}M_t(z), 
\quad M_0(z) = \int_\T \frac{x+z}{x-z}\, \alpha(dx) = 1+2\psi_\alpha(z).
\end{equation}
There exists a $\boxtimes$-semigroup $(\nu_{t})_{t\geq0} \subset \cP(\T)$ such that 
$M_t = M_0 \circ \eta_{\nu_{t}}$ for all $t\geq0$.
\end{theorem}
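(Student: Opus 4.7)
The strategy is to construct the solution explicitly as $M_t=M_0\circ\eta_{\nu_t}$, where $(\nu_t)_{t\geq 0}$ is a $\boxtimes$-semigroup fabricated from the pair $(S,\alpha)$ via Theorem \ref{MFID}, and to prove uniqueness by the method of characteristics. Since $M_0$ is the Cauchy-Herglotz transform of a probability measure on $\T$, one computes $\Re M_0(z)=\int_\T\frac{1-|z|^2}{|x-z|^2}\,\alpha(dx)>0$ on $\D$, so $M_0:\D\to RH$; combined with $S:RH\to RH$ the function $u_0:=S\circ M_0:\D\to\overline{RH}$ is holomorphic with $\Re(u_0)\geq 0$. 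By the final assertion of Theorem \ref{MFID}, applied to $tu_0$ for each $t\geq 0$, there exist freely infinitely divisible $\nu_t\in\cP_\times(\T)$ with $\Sigma_{\nu_t}(z)=\exp(t\,u_0(z))$; using \eqref{FM}, the family $\{\nu_t\}$ is a weakly continuous $\boxtimes$-semigroup with $\nu_0=\delta_1$, and $\eta_{\nu_t}$ is defined on all of $\D$ with $\eta_{\nu_t}(\D)\subset\D$.

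For existence, I would set $M_t:=M_0\circ\eta_{\nu_t}$. The germ identity $\eta_{\nu_t}^{-1}(z)=z\exp(tu_0(z))$ produces, exactly by the calculation already performed in Section \ref{sec_heat} (differentiating $\eta_t^{-1}\circ\eta_t=\mathrm{id}$ once in $t$, once in $z$, and eliminating),
\[
\partial_t\eta_{\nu_t}(z)=-z\,u_0(\eta_{\nu_t}(z))\,\partial_z\eta_{\nu_t}(z),
\]
first on a neighbourhood of $0$ and then on all of $\D$ by analytic continuation. The chain rule together with the tautology $u_0(\eta_{\nu_t}(z))=S(M_0(\eta_{\nu_t}(z)))=S(M_t(z))$ immediately converts this into \eqref{eq_heat}, and $\eta_{\nu_0}=\mathrm{id}_\D$ gives the correct initial value.

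For uniqueness, let $\{N_t\}$ be any holomorphic solution of \eqref{eq_heat} with values in $RH$ and consider the characteristic flow $\dot g_t(z)=g_t(z)\,S(N_t(g_t(z)))$, $g_0(z)=z$, solved locally in $z$ near $0$. By \eqref{eq_heat} one has $\frac{d}{dt}N_t(g_t(z))\equiv 0$, so $N_t(g_t(z))=M_0(z)$; feeding this back into the ODE collapses it to $\dot g_t(z)=g_t(z)\,S(M_0(z))=g_t(z)u_0(z)$, which integrates to $g_t(z)=z\exp(tu_0(z))=\eta_{\nu_t}^{-1}(z)$. Hence $N_t\circ\eta_{\nu_t}^{-1}=M_0$ on a neighbourhood of $0$, i.e.\ $N_t=M_0\circ\eta_{\nu_t}$ there, and by the identity principle this equality extends to all of $\D$. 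This simultaneously yields uniqueness and the representation $M_t=M_0\circ\eta_{\nu_t}$ demanded by the theorem.

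The main obstacle I expect is the careful control of domains in the characteristic argument: the formula $\eta_{\nu_t}^{-1}(z)=z\exp(tu_0(z))$ is only an identity of germs at $0$, so one must check that the characteristic curves stay in $\D$ for small $|z|$ (in order to evaluate $N_t$ on them) and that the image of $\eta_{\nu_t}^{-1}$ near $0$ is a genuine open neighbourhood of $0$ on which the analyticity argument applies. Both points are routine once one observes $\eta_{\nu_t}(0)=0$ and $\eta_{\nu_t}'(0)=\exp(-tu_0(0))\neq 0$, so that $\eta_{\nu_t}$ is a local biholomorphism at the origin, but this is the bookkeeping step that most deserves attention; everything else reduces either to the chain rule or to a direct appeal to Theorem \ref{MFID}.
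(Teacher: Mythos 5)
Your existence argument coincides with the paper's: both construct $\nu_t$ from the final assertion of Theorem \ref{MFID} applied to $\exp(tS(M_0(z)))$, differentiate the germ identity $\eta_{\nu_t}^{-1}(z)=z\exp(tS(M_0(z)))$, and verify that $M_0\circ\eta_{\nu_t}$ solves \eqref{eq_heat} by the chain rule. Your uniqueness argument, however, is genuinely different. The paper passes to $\eta_t=\frac{1-M_t}{1+M_t}$, which maps $\D$ into $\D$ and fixes $0$, expands everything in power series, and observes that the coefficients $a_n(t)$ satisfy a triangular system of linear ODEs ($\dot a_1=-b_0a_1$, $\dot a_2=-2b_0a_2-b_1a_1^2$, etc.), each of which has a unique solution; this is entirely algebraic and sidesteps any discussion of domains. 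You instead run the method of characteristics on an arbitrary solution $N_t$: the flow $\dot g_t=g_tS(N_t(g_t))$ conserves $N_t\circ g_t$, which collapses the flow to $g_t(z)=z\exp(tu_0(z))=\eta_{\nu_t}^{-1}(z)$ and forces $N_t=M_0\circ\eta_{\nu_t}$ near $0$, hence on $\D$ by the identity theorem. This is correct, and you correctly flag the one point that needs care, namely that for fixed $t_0$ the set of $z$ with $|z|e^{t_0\Re u_0(z)}<1$ is an open neighbourhood of $0$ on which the characteristics survive up to time $t_0$ inside $\D$ (a continuation argument, using that $u_0$ is evaluated at the fixed starting point $z$ rather than along the trajectory). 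What your route buys is that the subordination representation $N_t=M_0\circ\eta_{\nu_t}$ drops out directly for the \emph{arbitrary} solution, rather than being inherited from the constructed one after uniqueness is established; what the paper's route buys is that it avoids solving a non-autonomous ODE whose right-hand side involves the unknown solution, needing only pointwise differentiability of the Taylor coefficients in $t$. Both are legitimate; neither has a gap beyond the usual unstated regularity-in-$t$ assumptions common to both.
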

In Appendix \ref{A1}, we briefly describe the ``chordal'' or ``additive'' case, 
which corresponds to the free convolution of probability measures on $\R$.
\begin{proof}
For every $t\geq0$, the function $\exp(t S(M_0(z))$ is the $\Sigma$-transform of a 
freely infinitely divisible measure $\nu_{t}$ by Theorem \ref{MFID}. 
Thus $\eta_{\nu_{t}}^{-1}(z) = z\exp(t S(M_0(z))$ and 
\[  \frac{\partial}{\partial t}\eta_{\nu_{t}}^{-1}(z) = \eta_{\nu_{t}}^{-1}(z) S(M_0(z)), 
\quad \eta_{\nu_{0}}^{-1}(z) = z.\]
This yields
\[  \frac{\partial}{\partial t}\eta_{\nu_{t}}(z) =  -z S(M_0(\eta_{\nu_{t}}(z))) \cdot \frac{\partial}{\partial z}\eta_{\nu_{t}}(z), 
\quad \eta_{\nu_{0}}(z) = z.\]
Let $N_{t}=M_0\circ \eta_{\nu_{t}}$. Then 
\[  \frac{\partial}{\partial t}N_{t}(z) =  -z S(N_t(z))) \cdot \frac{\partial}{\partial z}N_t(z), 
\quad N_0=M_0.\]
We see that equation \eqref{eq_heat} has at least one solution.\\

Let $M_t$ be an arbitrary solution and put $\eta_t=\frac{1-M_t}{1+M_t}$. Then 
$\eta_t$ maps $\D$ holomorphically into $\D$ and satisfies
\[ \frac{\partial}{\partial t}\eta_t(z) = -z u(\eta_t(z)) \cdot \frac{\partial}{\partial z}\eta_t(z), \quad
 \eta_0(z) = \eta_\alpha,\]
where $u=S(\frac{1-z}{1+z})$ maps $\D$ into $RH$. Putting $z=0$ yields that $\eta_t(0)=0$ for all $t\geq0$.
We now consider the power series expansions $\eta_t(z)=\sum_{n=1}^\infty a_n(t)z^n$, $\eta_\alpha(z)=\sum_{n=1}^\infty c_nz^n$, 
and $u(z)=\sum_{n=0}^\infty b_nz^n$. \\
The above differential equation yields
\[\dot{a_1}(t)z+\dot{a_2}(t)z^2+... = -z \sum_{n=0}^\infty (b_n(a_1(t)z+a_2(t)z^2+...)^n) \cdot (a_1(t)+2a_2(t)z+3a_3(t)z^2+...)\]
and we obtain a recursive system of differential equations for the coefficients $a_n(t)$, namely
\begin{eqnarray*}
&&\dot{a_1}(t) = -b_0a_1(t), \quad a_1(0) = c_1,\\
&&\dot{a_2}(t) = -2b_0a_2(t)-b_1a_1(t)^2, \quad a_2(0) = c_2,\\
&& etc.
\end{eqnarray*}

We conclude that each function $t\mapsto a_n(t)$ is uniquely determined and thus also $(\eta_t)_{t\geq0}$ and $(M_t)_{t\geq0}$ is uniquely determined. We conclude $M_t=N_t$ for all $t\geq0$.
\end{proof}

By the Herglotz representation formula, every $M_t$ can be written as 
\begin{equation}\label{measures}
M_t(z)=\int_\T \frac{x+z}{x-z}\mu_t(dx) = 1+2\psi_{\mu_t}(z)
\end{equation} for a probability measure $\mu_t$. 

\begin{remark}
We have $M_t = M_0 \circ \eta_{\nu_{t}}$, i.e. $\psi_{\mu_t} = \psi_{\mu_0}\circ \eta_{\nu_t}$. 
In other words: $\mu_t = \mu_0 \rhd \nu_t$.\\
Now let us consider instead $\alpha_t := \mu_0 \boxtimes \nu_t$. 
We have $\eta_{\alpha_t}^{-1}(z) = \frac1{z}\eta_{\mu_0}^{-1}(z) \eta_{\nu_t}^{-1}(z)$ and
locally

\[  \frac{\partial}{\partial t}\eta_{\alpha_{t}}^{-1}(z) = 
  \eta_{\alpha_t}^{-1}(z) S(M_0(z)).\]
This yields
\[  \frac{\partial}{\partial t}\eta_{\alpha_{t}}(z) =  -z S(M_0(\eta_{\alpha_{t}}(z))) 
\cdot \frac{\partial}{\partial z}\eta_{\alpha_{t}}(z).\]

Let $\hat{M}_{t}=1+2\psi_{\alpha_t}(z)=F(\eta_{\alpha_t})$ with $F(z)=\frac{1-z}{1+z}$. (Note that 
$F^{-1}(z)=F(z)$.)
Put $\hat{S}=S\circ M_0\circ F$, which maps $RH$ into $RH$.
Then $\hat{M}_t$ satisfies an equation of the same 
type as \eqref{eq_heat}:
\[  \frac{\partial}{\partial t}\hat{M}_{t}(z) =  -z \hat{S}(\hat{M}_t(z)) \cdot 
\frac{\partial}{\partial z}\hat{M}_t(z), 
\quad \hat{M}_0=M_0.\]

\end{remark}

For $\Im(z)>0$ define $V_t(z)=-iM_t(e^{iz})$. Then $V_t$ maps the upper half-plane into the 
 lower half-plane and we obtain 
\begin{equation}\label{mmm}
 \frac{\partial V_t(z)}{\partial t}= i S(iV_t(z)) \frac{\partial V_t(z)}{\partial z}=
  U(V_t(z)) \frac{\partial V_t(z)}{\partial z}, 
\end{equation}
with $U(z)=iS(iz)$, which maps $-\Ha$ into $\Ha$.

\newpage

\begin{theorem} \label{prob_prop}
Let $\{M_t\}_{t\geq 0}$ be the solution to \eqref{eq_heat}.
\begin{itemize}
\item[(a)] $\{M_t\}_{t\geq 0}$ is a (decreasing and normalized) subordination family. 
 \item[(b)] There exists $T>0$ such that, for all $t\geq T$, $\supp \mu_t = \T$ and 
 $M_t$ can be extended analytically to $\overline{\D}$ with $\Re(M_t(x))>0$ for all $x\in\T$.
Furthermore, $\mu_t$ is absolutely continuous for all $t\geq T$. 
\item[(c)] We have $M_t(z)\to 1$ locally uniformly as $t\to\infty$. Equivalently, 
$\mu_t$ converges weakly to the uniform distribution on $\T$ as $t\to\infty$.
\end{itemize}
\end{theorem}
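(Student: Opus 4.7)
The plan is to combine the representation $M_t = M_0 \circ \eta_{\nu_t}$ from Theorem~\ref{at_most} with the Loewner--Herglotz framework of Section~\ref{sec_Loewner}. Part (a) follows directly from Theorem~\ref{Pom_sub} applied to $h(z,t) := S(M_t(z))$: this is holomorphic in $z$ as a composition, satisfies $\Re h(z,t) > 0$ because $M_t(\D) \subset RH$ and $S$ preserves $RH$, has $h(0,t) = S(M_t(0)) = S(1) \in RH$ independent of $t$ (since $M_t(0) = 1$ as $\mu_t$ is a probability measure), and is continuous (hence measurable) in $t$ since $t \mapsto M_t$ is continuous. Thus $\{M_t\}$ is a (decreasing, normalized) subordination chain.

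For (c), I would prove $\eta_{\nu_t} \to 0$ locally uniformly on $\D$. The harmonic function $\zeta \mapsto \Re(S(M_0(\zeta)))$ is non-negative on $\D$ and equals $\Re(S(1)) > 0$ at $0$; the maximum principle forces it to be strictly positive throughout $\D$. Fix $r \in (0,1)$ and let $c_r := \min_{|\zeta| \leq r} \Re(S(M_0(\zeta))) > 0$. Schwarz's lemma gives $|\eta_{\nu_t}(w)| \leq |w|$, so $\eta_{\nu_t}(w) \in \{|\zeta| \leq r\}$ for all $|w| \leq r$. With $\Phi_t(\zeta) := \zeta \exp(t S(M_0(\zeta)))$, the identity theorem extends the local relation $\Phi_t \circ \eta_{\nu_t} = \mathrm{id}$ near $0$ to the identity on all of $\D$. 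Taking moduli yields
\[ |w| \;=\; |\eta_{\nu_t}(w)|\, e^{t \Re(S(M_0(\eta_{\nu_t}(w))))} \;\geq\; |\eta_{\nu_t}(w)|\, e^{t c_r}, \]
so $\sup_{|w| \leq r} |\eta_{\nu_t}(w)| \leq r\, e^{-t c_r} \to 0$. Hence $M_t = M_0 \circ \eta_{\nu_t} \to M_0(0) = 1$ locally uniformly on $\D$; equivalently, all positive moments of $\mu_t$ vanish in the limit, so $\mu_t$ converges weakly to the normalized Haar measure on $\T$.

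Part (b) is the main step. My plan is to establish that for $t$ sufficiently large, $\eta_{\nu_t}$ admits an analytic continuation to a neighborhood $U$ of $\overline{\D}$ with $\eta_{\nu_t}(\overline{\D}) \Subset \D$. Granted this, $M_t = M_0 \circ \eta_{\nu_t}$ extends analytically to $U$, and $\Re(M_t(x)) = \Re(M_0(\eta_{\nu_t}(x))) > 0$ for $x \in \T$ because $\eta_{\nu_t}(x) \in \D$ and $M_0(\D) \subset RH$. The Herglotz representation \eqref{measures} then identifies the density of $\mu_t$ on $\T$ as $\frac{1}{2\pi}\Re(M_t|_\T)$, which is continuous and strictly positive, giving absolute continuity and $\supp \mu_t = \T$. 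The hard part is the analytic extension itself. One route is via free probability: $\nu_t$ is freely infinitely divisible with $\Sigma_{\nu_t} = \exp(t S \circ M_0)$ and L\'evy weight scaling as $t\rho_0$, and a smoothing theorem (in the spirit of Biane's analysis of free unitary Brownian motion, cf.~\cite{AWZ14}) should give for $t$ large that $\nu_t$ has a positive analytic density on $\T$, whence $\eta_{\nu_t}$ extends analytically across $\T$ with values in $\D$. A direct complex-analytic alternative is to analyse $\Phi_t$: since $|\Phi_t'(0)| = e^{t \Re(S(1))} \to \infty$, Koebe-type distortion estimates should show that for $t$ large $\Phi_t$ is univalent on some fixed disk $\{|\zeta| < r_0\} \subset \D$ whose image covers a neighborhood of $\overline{\D}$, so that $\eta_{\nu_t} = \Phi_t^{-1}$ provides the desired extension.
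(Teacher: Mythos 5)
Your parts (a) and (c) are sound. Part (a) is exactly the paper's argument (apply Theorem \ref{Pom_sub} to the Herglotz field $h(z,t)=S(M_t(z))$). Your part (c) is correct and is a genuinely different, arguably cleaner route than the paper's: you exploit the global identity $\Phi_t\circ\eta_{\nu_t}=\mathrm{id}$ with $\Phi_t(\zeta)=\zeta\exp(tS(M_0(\zeta)))$ together with the Schwarz lemma to get the quantitative decay $\sup_{|w|\le r}|\eta_{\nu_t}(w)|\le re^{-tc_r}$, whereas the paper deduces $a_m(t)\to0$ coefficient-by-coefficient from the recursive ODE system in the proof of Theorem \ref{at_most} and then invokes normal families. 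Both work; yours gives a rate.

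Part (b), however, is a plan rather than a proof, and the one route you make concrete is flawed. You propose to show $\Phi_t$ is univalent on a fixed disc $\{|\zeta|<r_0\}$ for large $t$ because $|\Phi_t'(0)|\to\infty$. But $\Phi_t'(\zeta)=e^{tS(M_0(\zeta))}\bigl(1+t\,\zeta S'(M_0(\zeta))M_0'(\zeta)\bigr)$, and unless $S\circ M_0$ is constant the function $g(\zeta)=\zeta S'(M_0(\zeta))M_0'(\zeta)$ is a nonconstant analytic function vanishing at $0$; by the open mapping theorem $g$ attains the value $-1/t$ arbitrarily close to $0$ once $t$ is large, so $\Phi_t'$ has zeros in every fixed disc around the origin and $\Phi_t$ is \emph{not} univalent there for large $t$ (e.g.\ already for $S(z)=2z$, $M_0(z)=\frac{1+z}{1-z}$, where $g(\zeta)=4\zeta/(1-\zeta)^2$). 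The large derivative at $0$ reflects precisely the shrinking of $\eta_{\nu_t}$, not an expanding univalent range of $\Phi_t$. Your alternative route, citing an unspecified ``smoothing theorem in the spirit of Biane'' for the general semigroup $\nu_t$ with $\Sigma_{\nu_t}=\exp(t\,S\circ M_0)$, is exactly the statement that needs proving; nothing in the paper or in \cite{AWZ14} as cited hands it to you for arbitrary $S$ and $M_0$. The paper instead proves (b) directly by the method of characteristics for the quasilinear equation \eqref{mmm} satisfied by $V_t(z)=-iM_t(e^{iz})$: the characteristics are the straight lines $z(t)=z_0-U(V_0(z_0))t$, along which $V_t$ is constant; they move strictly downward since $\Im(-U(V_0(z_0)))<0$, and by $2\pi$-periodicity all characteristics emanating from the line $\Im z=1$ cross $\R$ before a uniform time $T=\sup_{x\in\R}t_0(i+x)<\infty$. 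For $t\ge T$ the solution on $\overline{\Ha}$ is therefore transported from analytic data in $\{\Im z>1\}$, giving the analytic extension of $M_t$ to $\overline{\D}$ with $\Re M_t>0$ on $\T$; the inversion formula then yields $\supp\mu_t=\T$ and absolute continuity. You would need either to reproduce an argument of this kind or to actually prove the boundary regularity of $\nu_t$ (equivalently, that $\eta_{\nu_t}$ extends across $\T$ into $\D$) before your reduction closes.
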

\begin{remark}The fact that free convolution also leads to subordination was observed
by Voiculescu in \cite[Proposition 4.4]{Voi93}
for the case of compactly supported measures.
The general case was considered in \cite[Theorem 3.1]{Bia98}.
\end{remark}

\begin{proof}

We start with (a). Equation \eqref{eq_heat} is Loewner's partial differential equation 
\eqref{Loe_PDE} with Herglotz vector field $h(z,t)=S(M_t(z))$. We have 
$h(0,t)=S(M_t(0))=S(1)=:a\in RH$. Theorem \ref{Pom_sub}
implies that $\{M_t\}_{t\geq 0}$ is a normalized subordination family. 
This can also be seen directly from the proof of
Theorem \ref{at_most}.\\

Next we prove (b) and we use the function $V_t$ satisfying \eqref{mmm}.
%

Let $z_0\in\Ha$ and denote by $z(t)=z(t;z_0)$ the solution to 
\[\dot{z}(t) = -U(V_t(z(t))), \quad z(0)=z_0.\]
A simple calculation shows that $\frac{d}{dt}V_t(z(t))=0$. 
Hence $V_t(z(t))=V_0(z_0)$ for all $t\geq0$ and $\ddot{z}(t)=0$, 
so \[z(t)=z_0-U(V_0(z_0))\cdot t.\] 
Note that $\Im(-U(V_0(z_0)))<0$. So $z(t)$ will hit the real axis 
at some $x(z_0)$ at a certain time $t=t_0(z_0)$. Clearly, $t_0(z_0)>0$ as $z_0\in\Ha$. Furthermore, as $V_t(z)=V_t(z+2\pi)$ for all $z\in\Ha$ and $t\geq 0$, we have $z(t;z_0+2\pi)=z(t;z_0)+2\pi$ and thus 
$t_0(z_0+2\pi) = t_0(z_0)$ for all $z_0\in\Ha$.\\
Next we note that $z(t)$ is defined for all $t\geq0$. This shows 
that $V_{t_0}(z_0)$ can be extended analytically to a neighbourhood 
of $x(z_0)$. 
Now consider $T=\sup_{x\in\R} t_0(i+x)$. Clearly, $T>0$ and we have 
$T<\infty$ since $x\mapsto t_0(i+x)$ is $2\pi$-periodic.\\
Denote the holomorphic function $z_0\mapsto z(t;z_0)$ by $f_t(z_0)$. We have 
$V_t\circ f_t = V_0$. \\
For $t\geq T$, $f_t$ maps a subdomain of $\Im(z)>1$ onto the upper half-plane. 
Hence, $V_t$ can be extended analytically to $\R$ and $\Im(V_t(x))<0$ for all $x\in\R$. 
This shows that $M_t$ can be extended analytically to $\overline{\D}$ and $\Re(M_t(x))>0$ for all 
$x\in\T$ and $t\geq T$.\\

Let $t\geq T$. The analytic extension of $\Re(M_t)$ to the boundary $\T$ and the fact that $\Re(M_t(x))>0$ on $\T$ imply that $\supp \mu_t = \T$. This follows from the inversion formula (see \cite[p.91]{A65})
\begin{equation*}
\mu_t([e^{i\alpha}, e^{i\beta}]) = \lim_{r \uparrow 1}\int_\alpha^\beta \Re(M_t(r e^{ix}))\, \frac{dx}{2\pi }= \int_\alpha^\beta \Re(M_t(e^{ix}))\, \frac{dx}{2\pi }>0 \quad 
\end{equation*}
for all $0\leq \alpha\leq \beta\leq 2\pi$. Hence $\supp \mu_t = \T$ for all $t\geq T$. Furthermore, the function $[0,2\pi]\ni x\mapsto \mu_t([1,e^{ix}])$ is continuously differentiable and thus absolutely continuous.
Hence $\mu_t$ is absolutely continuous (with respect to the uniform distribution on $\T$) for all $t\geq T$.\\

Finally we show (c). The coefficient function $a_m(t)$ in the proof of Theorem \ref{at_most} 
satisfies a differential equation of the form $\dot{a}_m(t)=-mb_0a_m(t)+f_m(t)$ and 
$a_1(t)=c_1e^{-b_0t}$. We have $b_0=S(1)\in RH$. By induction we see that 
$f_m(t)$ is a linear combination of decreasing exponential functions and thus also 
$a_m(t)$ is a linear combination of decreasing exponential functions and we conclude 
that $\lim_{t\to\infty}a_m(t)=0$ for all $m\geq 1$.\\
This implies that $\frac{1-M_t(z)}{1+M_t(z)}\to 0$ and thus $M_t(z)\to 1$ locally uniformly. Note that the constant function $1$ is the Herglotz function of the uniform distribution, i.e. $1\equiv \int_{0}^{2\pi} \frac{e^{ix}+z}{e^{ix}-z}\frac{dx}{2\pi}$ on $\D$. 
Finally, locally uniform convergence of Herglotz functions is equivalent to weak convergence of the probability measures,  see \cite[Lemma 2.11]{FHS}.
\end{proof}

Recall that $M_t$ has the form $M_t = M_0\circ \eta_{\nu_t}$, where $(\eta_{\nu_t})_{t\geq0}$ 
is a decreasing Loewner chain.\\ 
Due to Theorem \ref{MFID}, each $f_t:=\eta_{\nu_t}^{-1}$ extends analytically to $\D$. 
 So one might wonder whether $(f_t)_{t\geq0}$ is an increasing Loewner chain on $\D$. \\
 This is the case in the simple example $\eta_{\nu_t}(z)=e^{-t}z$, but the 
 following example shows that it is not true in general.
\begin{example}
Assume that $\eta_\mu$ extends to a conformal mapping from $\C\setminus (-\infty,-1]$ 
onto $\D$ and that $\eta_\mu'(0)>0$. Then $f:=\eta_\mu^{-1}$ is $4$ times the Koebe function, 
$f(z)=\frac{4z}{(1-z)^2}$.\\
Due to Theorem \ref{MFID}, $\mu$ is freely infinitely divisible and 
$u_\mu(z)=\log(f(z)/z)=\log 4 -2\log(1-z)$ maps $\D$ into $RH$.\\
Now let $(\eta_{\nu_t})_{t\geq0}$ be the decreasing Loewner chain from 
Theorem \ref{at_most} with  $S(z)=z$ and $M_0(z)=u_\mu(z)$.
Due to the proof of Theorem \ref{at_most} we have 
\begin{equation}\label{t}
f_t(z):= \eta_{\nu_t}^{-1}(z)=z\exp(t S(M_0(z))=z\exp(t u_\mu(z))
 \end{equation}
and $f_1=f$.\\
Now assume that $(f_t)_{t\geq 0}$ is an increasing Loewner chain. 
Then, for $t>1$, the function $f_t$ maps $\D$ conformally onto a simply connected domain that contains 
$\C\setminus(-\infty,-1]$ with $f_t(0)=0$ and $f'_t(0)=4^t$.
This implies that $f_t(\D)= \C\setminus(-\infty,-4^{t-1}]$ and $f_t = 4^{t-1}f_1$.
But \eqref{t} shows that $t\mapsto f_t/4^{t-1}$ is not constant on $[1,\infty)$. Hence, 
$(f_t)_{t\geq 0}$ is not an increasing Loewner chain. \\
The figure below illustrates $f_t$ and it seems that $(f_t)_{0\leq t\leq 1}$ is indeed 
(a part of) an increasing Loewner chain.
\end{example}

\begin{figure}[ht]
\rule{0pt}{0pt}
\centering
\includegraphics[width=17cm]{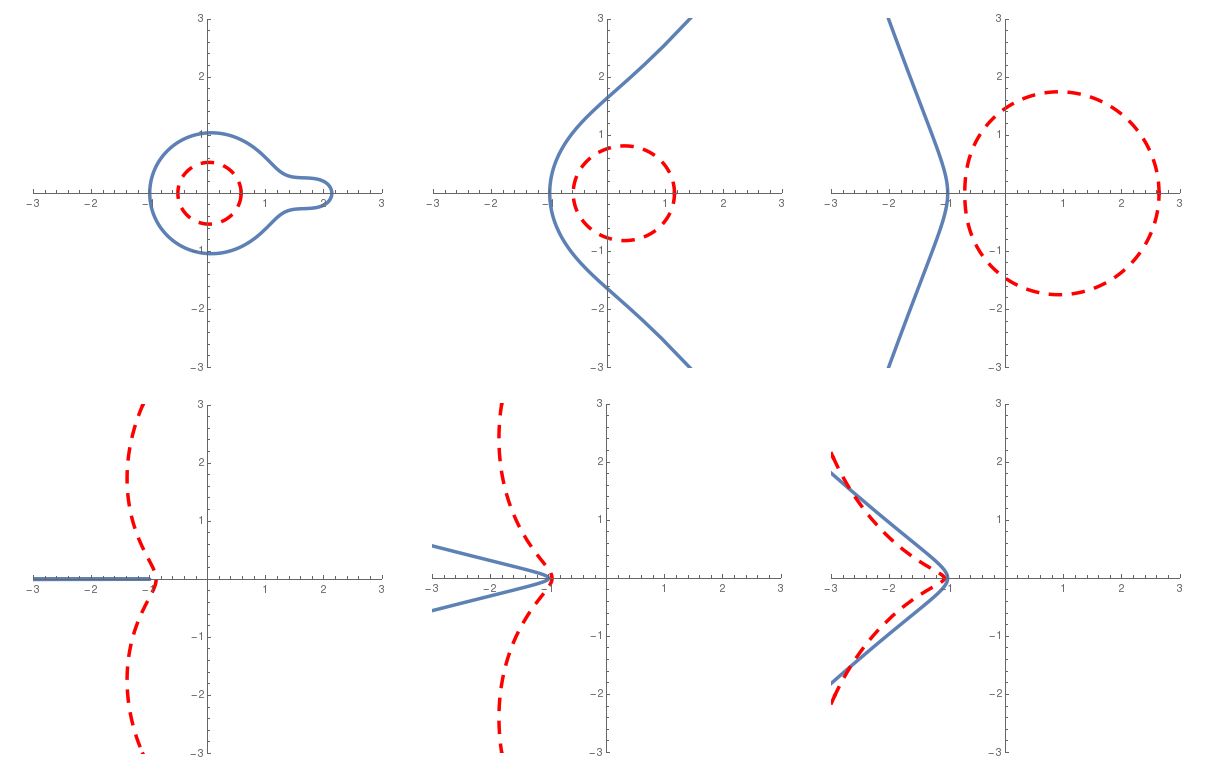}
\caption{The curves $f_t(0.999\T)$ (blue) and $f_t(0.5\T)$ ({\color{red}red}, dashed) for 
$t=0.05, 0.3, 0.6, 1, 1.1, 1.3$.}
\end{figure}

\newpage
\appendix
\section{Appendix}

\subsection{Free additive semigroups}\label{A1}
We briefly describe the ``chordal'' (or ``additive'') analogue of equation \eqref{eq_heat}.\\
 Let $\mu$ be a probability measure on $\R$. The Cauchy transform (or Stieltjes transform) of a probability measure $\mu$ on $\R$ is given by 
$$G_{\mu}(z)=\int_\R\frac{1}{z-t}\,\mu(dt), \qquad z\in\C\setminus \R.$$
The $F$-transform of $\mu$ is simply defined by $F_\mu(z)=1/G_\mu(z)$ for $z\in\Ha=\{z\in\C\,|\, \Im(z)>0\}$.\\
In some truncated cone in $\mathbb{H}$, we can invert $G_\mu$ (with respect to composition). Denote the inverse by $G_\mu^{-1}(z)$. 
The $R$-transform $R_\mu$ is defined by $R_\mu(z)=G_\mu^{-1}(z)-\frac1{z}$. 
Instead of $R_\mu$, one might also regard the Voiculescu transform $\varphi_\mu(z)$ defined by $\varphi_\mu(z)=F_\mu^{-1}(z)-z=G_\mu^{-1}(1/z)-z=R_\mu(1/z)$.\\
 
For two probability measures $\mu$ and $\nu$ on $\R$, the
\emph{additive free convolution} $\boxplus$ is defined by
\begin{equation*}
\varphi_{\mu \boxplus \nu}(z) = \varphi_{\mu}(z)  + \varphi_{\nu}(z).
\end{equation*}
A probability measure $\mu$ on $\R$ is called \emph{(freely) infinitely divisible} 
if for every $n\in\N$ there exists $\mu_n$ such that 
$\mu = \mu_n \boxplus \cdots \boxplus \mu_n$ ($n$-fold convolution).

\begin{theorem}[Theorem 5.10 in \cite{BV93}] \label{thmBV93}
For a probability measure $\mu$ on $\R$, the following statements are equivalent.
\begin{enumerate}[\rm(1)]
\item $\mu$ is infinitely divisible with respect to free convolution $\boxplus$.
\item $\mu=\mu_1$  for a weakly continuous $\boxplus$-semigroup $\{\mu_t\}_{t\geq0}$.
\item For any $t>0$, there exists a probability measure $\mu^{\boxplus t}$ with the property $\varphi_{\mu^{\boxplus t}}(z) = t\varphi_\mu(z).$
\item $R_\mu$ extends to a Pick function, i.e.\ an analytic map of $\mathbb{H}$ into $\mathbb{H} \cup \R$.
\item\label{FLK1} There exist $\gamma \in \R$ and a finite, non-negative measure $\rho$ on $\R$ such that
\begin{equation*}
\varphi_\mu(z)=\gamma +\int_{\mathbb{R}}\frac{1+z x}{z-x} \rho({\rm d}x) ,\qquad z\in \mathbb{H}.
\end{equation*}
The pair $(\gamma,\rho)$ is unique.
\end{enumerate}
Conversely, given $\gamma\in\R$ and a finite, non-negative measure $\rho$ on $\R$, there exists a unique $\boxplus$-infinitely divisible distribution $\mu$ which has the Voiculescu transform of the form \eqref{FLK1}.
\end{theorem}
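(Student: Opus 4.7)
My plan is to close the loop $(5)\Rightarrow (4)\Rightarrow (3)\Rightarrow (2)\Rightarrow (1)\Rightarrow (5)$, then to read off uniqueness and the converse existence statement directly from the representation. The easy implications are essentially algebraic, while the hard step will be to produce from infinite divisibility the Nevanlinna-type representation of $\varphi_\mu$.

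First I would dispatch the soft implications. For $(5)\Rightarrow (4)$, I compute $\Im\bigl[(1+zx)/(z-x)\bigr]=(1+x^{2})\,\Im(z)/|z-x|^{2}\geq 0$ for $z\in\Ha$, $x\in\R$; integrating against the nonnegative finite measure $\rho$ and using $\gamma\in\R$ shows $\varphi_\mu$ maps $\Ha$ into $\Ha\cup\R$, which via $R_\mu(z)=\varphi_\mu(1/z)$ gives the Pick property for $R_\mu$. For $(4)\Rightarrow (3)$, given a Pick $\varphi_\mu$, the function $t\varphi_\mu$ is again Pick with the same asymptotic behavior $t\varphi_\mu(z)/z\to 0$ nontangentially at $\infty$; the Bercovici--Voiculescu characterization of Voiculescu transforms of probability measures on $\R$ (\emph{a Pick function $\varphi$ on $\Ha$ is a Voiculescu transform iff $\varphi(z)/z\to 0$ nontangentially as $z\to\infty$}) then produces a unique $\mu^{\boxplus t}\in\mathcal P(\R)$ with $\varphi_{\mu^{\boxplus t}}=t\varphi_\mu$. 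For $(3)\Rightarrow (2)$, set $\mu_{t}:=\mu^{\boxplus t}$; the semigroup law $\varphi_{\mu_s\boxplus\mu_t}=\varphi_{\mu_s}+\varphi_{\mu_t}=(s+t)\varphi_\mu=\varphi_{\mu_{s+t}}$ is immediate, and weak continuity follows from continuity of $t\mapsto t\varphi_\mu$ on a truncated cone together with the standard fact that locally uniform convergence of Voiculescu transforms on such a cone implies weak convergence of the underlying measures. The implication $(2)\Rightarrow (1)$ is immediate by setting $\mu_n:=\mu_{1/n}$ so that $\mu=\mu_n^{\boxplus n}$.

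The core of the argument is $(1)\Rightarrow (5)$. Given infinite divisibility, pick for each $n$ a probability measure $\mu_n$ with $\mu=\mu_n^{\boxplus n}$, so $\varphi_\mu=n\varphi_{\mu_n}$ on a truncated cone $\Gamma$ where both sides are defined. Equivalently $F_{\mu_n}^{-1}(z)=z+\tfrac{1}{n}\varphi_\mu(z)$. Standard estimates for the reciprocal Cauchy transform $F_{\mu_n}$ show that $F_{\mu_n}(z)\to z$ uniformly on compact subsets of $\Ha$, so $\mu_n\Rightarrow\delta_0$, and the inverses $F_{\mu_n}^{-1}$ are defined on domains $\Gamma_n\subset\Ha$ that exhaust $\Ha$ as $n\to\infty$. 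Consequently the difference quotients
\[
\varphi_\mu(z)=\frac{F_{\mu_n}^{-1}(z)-z}{1/n}
\]
provide analytic extensions of $\varphi_\mu$ to larger and larger subdomains of $\Ha$, and since each $F_{\mu_n}^{-1}(z)-z$ is a Pick function (taking values in $\overline{\Ha}$), a normal families argument yields that $\varphi_\mu$ extends to a Pick function on all of $\Ha$. The Nevanlinna representation of Pick functions then gives
\[
\varphi_\mu(z)=\alpha z+\beta+\int_\R\frac{1+tz}{z-t}\,\sigma(dt),
\]
with $\alpha\geq 0$, $\beta\in\R$, and $\sigma$ finite nonnegative. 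The asymptotic $\varphi_\mu(z)/z\to 0$ nontangentially, which is preserved under the above limiting procedure (since it already holds for each $\varphi_{\mu_n}$ and is scale-invariant under multiplication by $n$), forces $\alpha=0$, giving exactly the representation (5) with $(\gamma,\rho)=(\beta,\sigma)$.

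Uniqueness of $(\gamma,\rho)$ is then inherited from the uniqueness of the Nevanlinna representation of a Pick function together with the vanishing of the $\alpha z$ term. For the converse existence statement, I start with an arbitrary $(\gamma,\rho)$, define $\varphi$ by the integral formula in (5), verify via the same imaginary-part computation that $\varphi$ is Pick with $\varphi(z)/z\to 0$ nontangentially, and invoke the characterization of Voiculescu transforms to produce a unique $\mu\in\mathcal P(\R)$ with $\varphi_\mu=\varphi$; infinite divisibility of this $\mu$ then follows from the already-established implication $(4)\Rightarrow (3)\Rightarrow (2)\Rightarrow (1)$. The main obstacle is the analytic extension in $(1)\Rightarrow (5)$: one must transfer information from the truncated cone where $\varphi_\mu$ is intrinsically defined to the whole of $\Ha$, and this is precisely where the hypothesis of infinite divisibility is used in an essential way, through the family of approximants $\mu_n\Rightarrow\delta_0$ and the normal-families compactness of Pick functions.
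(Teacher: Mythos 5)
The paper offers no proof of this statement: it is imported verbatim as Theorem 5.10 of \cite{BV93} and used as a black box, so the only meaningful benchmark is the original Bercovici--Voiculescu argument. At the level of architecture your outline does follow it: the cycle $(5)\Rightarrow(4)\Rightarrow(3)\Rightarrow(2)\Rightarrow(1)\Rightarrow(5)$, the soft steps via the intrinsic characterization of Voiculescu transforms and the continuity theorem, and the hard step $(1)\Rightarrow(5)$ via approximants $\mu_n$ with $\varphi_\mu=n\varphi_{\mu_n}$, $\mu_n\Rightarrow\delta_0$, normal families, and the Nevanlinna representation with the linear term killed by the $o(z)$ asymptotics. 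That is the right plan.

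There is, however, a concrete error running through the write-up: a systematic confusion about which half-plane $\varphi_\mu$ maps into. With the paper's conventions ($G_\mu(z)=\int(z-t)^{-1}\mu(dt)$, $F_\mu=1/G_\mu$, $\varphi_\mu=F_\mu^{-1}-\mathrm{id}$) one has $\Im F_\mu(z)\geq\Im z$, hence $\Im\varphi_\mu(z)\leq 0$: the Voiculescu transform is \emph{anti}-Pick on $\Ha$, not Pick. Correspondingly $\Im\bigl[(1+zx)/(z-x)\bigr]=-(1+x^2)\,\Im(z)/|z-x|^{2}\leq 0$ for $z\in\Ha$; your computation drops the minus sign. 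As written, $(5)\Rightarrow(4)$ therefore asserts a false identity and then draws a non sequitur from it: even if $\varphi_\mu$ did map $\Ha$ into $\overline{\Ha}$, the formula $R_\mu(z)=\varphi_\mu(1/z)$ samples $\varphi_\mu$ at $1/z\in-\Ha$, so nothing about $R_\mu$ on $\Ha$ follows without first extending $\varphi_\mu$ to the lower half-plane by Schwarz reflection. The correct chain is: the kernel gives $\varphi_\mu(\Ha)\subset\overline{-\Ha}$, reflection gives $\varphi_\mu(-\Ha)\subset\overline{\Ha}$, and only then is $R_\mu=\varphi_\mu\circ(1/\cdot)$ Pick on $\Ha$. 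The same sign issue infects $(1)\Rightarrow(5)$ (the functions $F_{\mu_n}^{-1}(z)-z$ take values in $\overline{-\Ha}$, so Nevanlinna must be applied to $-\varphi_\mu$) and your quoted characterization of Voiculescu transforms (the correct condition is $\Im\varphi\leq 0$ together with $\varphi(z)=o(z)$ nontangentially). All of this is repairable without changing the structure, but as it stands the pivotal computation is wrong and the inference built on it does not close. Separately, in $(1)\Rightarrow(5)$ you should not assert global injectivity of $F_{\mu_n}$: the claim that the domains of $F_{\mu_n}^{-1}$ exhaust $\Ha$ has to be made through the locally uniform convergence $F_{\mu_n}\to\mathrm{id}$ (itself resting on the continuity theorem applied to $\varphi_{\mu_n}=\varphi_\mu/n\to 0$) plus a Rouch\'e--Hurwitz argument producing local inverses that are glued to the original $\varphi_\mu$ by the identity theorem.
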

Let  $\{\mu_t\}_{0\leq t}$ be a $\boxplus$-semigroup and let $G_t = G_{\mu_t}$. Then 
\[ \frac{\partial}{\partial t}G_t(z) = -\frac{\partial}{\partial z}G_t(z) \cdot R_\mu(G_t(z)), \quad G_0(z)=1/z.\]
This is a special case of Loewner's partial differential equation on the upper half-plane $\mathbb{H}$ due to property (4) of the above theorem. \\
Put $F_t=F_{\mu_t}=1/G_t$. Then
\[ \frac{\partial}{\partial t}F_t(z) = -\frac{\partial}{\partial z}F_t(z) \cdot \varphi_\mu(F_t(z)), \quad F_0(z)=z.\]
This equation corresponds to \eqref{addd} and we obtain the analogue of \eqref{eq_heat} by regarding an arbitrary initial value 
$F_0 = F_\alpha$ for some probability measure $\alpha$.

\subsection{Radial multiple SLE in the upper half-plane}\label{A2}

By Remark \ref{chordal_version}, we can regard radial multiple SLE 
also in $\Ha$ by using the Cayley transform $C$ (provided that $x_{N,k}$ is not $1$) and switching to the chordal
multi-slit Loewner equation
 \begin{eqnarray*}
dg_{N,t}(z) = \sum_{k=1}^N \frac{2\lambda_{N,k}}{g_{N,t}(z)-U_{N,k}(t)}dt, \quad g_{N,0}(z)=z,
\end{eqnarray*}
with 
\begin{eqnarray*}
&&dU_{N,k}(t) = \sqrt{\kappa\lambda_{N,k}}dB_t + \sum_{j\not= k} \frac{2 (\lambda_{N,k}+\lambda_{N,j})}{U_{N,k}(t)-U_{N,j}(t)}dt + \lambda_{N,k}(\kappa-4-2N) \Re\left(\frac{1}{U_{N,k}(t)-S_N(t)}\right)dt,\\
&& U_{N,k}(0)=C(x_{N,k})\in\R,\\
&& dS_N(t) = \sum_{k=1}^N \frac{2\lambda_{N,k}}{S_N(t)-U_{N,k}(t)}dt, \quad S_N(0) = i = C(0).
\end{eqnarray*}
If we drop the second $dt$-term for $U_{N,k}$, we obtain chordal multiple SLE. Questions concerning the limit $N\to\infty$ for this chordal case are discussed in \cite{MR3764710}.\\
However, we can also use \cite{MR3764710} to obtain a statement about the limit 
equation for the radial multiple SLE mappings if $\lambda_{N,k}=1/N$ for all $k$ and $N$. In this case, the evolution of the curves is quite similar to the evolution of
trajectories of a certain quadratic differential and the proof  of \cite[Theorem 3.3]{MR3764710} can be easily adjusted to get the following result.\\

In contrast to Theorem \ref{sim_thm}, however, we only obtain a convergent subsequence and we need an additional condition 
on the starting points of the SLE curves.

\begin{theorem} Let  $\lambda_{N,k}=1/N$ and $y_{N,k}=C(x_{N,k})$.  Assume that there exists $M>0$ such that 
$y_{N,k}\in [-M,M]$ for all $N$ and $k$. Finally, assume that \[  
\sum_{k=1}^N \frac1{N}\delta_{y_{N,k}} \to \mu\]
with respect to weak convergence, where $\mu$ is a probability measure on $\R$. Then there exists a $T>0$ and a subsequence $\{g_{N_k,t}\}_k$ which converges for every $t\in[0,T]$ in distribution with respect to locally uniform convergence and the limit process $g_t$ satisfies
 \begin{eqnarray*}
&&\frac{\partial }{\partial t}g_t(z)=M_t(g_t), \quad g_0(z)=z,\\
&& \frac{\partial }{\partial t}M_t(z) = -2\frac{\partial }{\partial z}M_t(z) \cdot M_t(z) - 2\Re\left(\frac{M_t(z)}{(z-S(t))^2}-\frac{M_t(S(t))}{(z-S(t))^2}-\frac{\frac{\partial }{\partial z} M_t(z)}{z-S(t)}\right), \quad g_{0}(z)=z,\\
&&\text{$S(t)=g_t(i)$, i.e. $\frac{d}{d t}S(t)=M_t(S(t)), \quad 
S(0)=i$.}
\end{eqnarray*}

\end{theorem}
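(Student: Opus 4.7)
The plan is to adapt the strategy of \cite[Theorem 3.3]{MR3764710}, with the main new ingredient being the additional drift that comes from the Cayley transformation of the radial equation into the upper half-plane. With $\lambda_{N,k}=1/N$, the SDE system displayed just above the theorem reads
\[
dU_{N,k}(t) = \sqrt{\kappa/N}\,dB_{N,k}(t) + \frac{4}{N}\sum_{j\not= k}\frac{dt}{U_{N,k}(t)-U_{N,j}(t)} + \frac{\kappa-4-2N}{N}\,\Re\!\left(\frac{1}{U_{N,k}(t)-S_N(t)}\right)dt,
\]
and the key observation is that the Brownian coefficient is of order $1/\sqrt N$, so the limit is deterministic, while the coefficient $(\kappa-4-2N)/N$ in front of the $\Re$-term converges to $-2$. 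It is this surviving drift that produces the extra $\Re$-term in the stated PDE for $M_t$.

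Next, introduce the empirical measure $\til_{N,t}=\frac{1}{N}\sum_{k=1}^N\delta_{U_{N,k}(t)}$ and apply It\^o's formula to $\int_\R f\,d\til_{N,t}$ for test functions $f\in C^2(\R,\C)$ with compact support. Symmetrising the double sum coming from the repulsive drift gives
\[
d\!\left(\int_\R f\,d\til_{N,t}\right) = 2\iint\frac{f'(x)-f'(y)}{x-y}\,\til_{N,t}(dx)\til_{N,t}(dy)\,dt - 2\int_\R f'(u)\,\Re\!\left(\frac{1}{u-S_N(t)}\right)\til_{N,t}(du)\,dt + R_N(t)\,dt + dW_N(t),
\]
where the It\^o correction $R_N$ and the martingale $W_N$ have coefficients of order $1/N$ and $1/\sqrt N$ respectively, and therefore vanish uniformly on $[0,T]$ with probability one. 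This is the chordal analogue of \eqref{mck:1_sim}.

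Tightness of $\{(\til_{N,t},S_N(t),g_{N,t})\}_N$ is then obtained by the same stochastic Arzel\`a--Ascoli argument as in Theorem \ref{thm:1}, applied with compactly supported test functions. The assumption $y_{N,k}\in[-M,M]$ plays the role of condition (c) of \cite[Theorem 2.5]{MR3764710}: it gives tightness of $\til_{N,0}$ and, by a short-time continuity argument, yields $T>0$ with
\[
\inf_{N\in\N,\,0\le t\le T}\dist\!\left(S_N(t),\,\supp\til_{N,t}\right)>0,
\]
so that the extra $\Re$-term stays uniformly bounded on $[0,T]$. Extract a convergent subsequence $\{\til_{N_k,t}\}_k$ with limit $\til_t$; the chordal analogue of Theorem \ref{control} then gives locally uniform convergence in distribution of $g_{N_k,t}$ to the solution $g_t$ of $\partial_t g_t=M_t(g_t)$, where $M_t(z)=2\int_\R(z-u)^{-1}\til_t(du)$. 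Inserting the test functions $f_z(u)=1/(z-u)$ into the weak equation and differentiating in $z$ produces the stated PDE for $M_t$, while $\dot S(t)=M_t(S(t))$ follows from $S(t)=g_t(i)$ and the Loewner equation.

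The main obstacle is the control of the extra drift involving $S_N(t)$: unlike in Theorem \ref{thm:1}, the quantity $1/(u-S_N(t))$ is not bounded a priori, since nothing rules out $S_N(t)$ approaching the support of $\til_{N,t}$. The compactness of the starting configuration $\{y_{N,k}\}\subset[-M,M]$ is used precisely to guarantee this separation for some positive time $T$, which is why only short-time convergence is obtained. Finally, without an a priori uniqueness statement for the limit PDE (in contrast with the pure Burgers case of Theorem \ref{sim_thm}), convergence is only guaranteed along a subsequence rather than of the full sequence.
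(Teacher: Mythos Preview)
Your proposal is correct and follows essentially the same route as the paper: both reduce the statement to \cite[Theorem 3.3]{MR3764710}, with the extra $\Re$-term coming from the force point at $i$ and the factor $4/N$ in the repulsive drift producing the additional $2$ in front of $\partial_z M_t\cdot M_t$. The only difference is presentational: the paper simply identifies the parameters in the framework of \cite[Section 3]{MR3764710} (namely $M_N=1$, $\alpha_{N,1}=(\kappa-4-2N)/2$, $s_{N,1}=i$, so that $\sigma_{N,0}=\tfrac{\kappa-4-2N}{2N}\delta_i\to -\delta_i$) and then invokes that theorem directly, whereas you spell out the underlying It\^o/tightness argument explicitly.
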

\begin{proof}
By comparing the SDE for $U_{N,k}$ with the one for $V_{N,k}$ in \cite[Section 3]{MR3764710}, we see that our setting corresponds to $M_N=1$, $\alpha_{N,1}=(\kappa-4-2N)/2$ and $s_{N,1}=i$, i.e. $\sigma_{N,0}=
\frac{\kappa-4-2N}{2N}\delta_i\to -\delta_i=\sigma$. Furthermore, 
$2 (\lambda_{N,k}+\lambda_{N,j})=4/N$ leads to an additional factor $2$ 
in front of $\frac{\partial }{\partial z}M_t(z) \cdot M_t(z)$.
The proof is now analogous to the proof of \cite[Theorem 3.3]{MR3764710}.
\end{proof}


\begin{thebibliography}{CDMG14}
  
\bibitem[AB10]{AbateBracci:2010}
M.~Abate, F.~Bracci, M.~D. Contreras, and S.~D{\'{\i}}az-Madrigal, \emph{The
  evolution of {L}oewner's differential equations}, Eur. Math. Soc. Newsl. 78 (2010), 31--38.  
  
\bibitem[Akh65]{A65} N.I.\ Akhiezer, \emph{The Classical Moment Problem}, Oliver and Boyd, Edinburgh, 1965.  
  
\bibitem[AWZ14]{AWZ14} M. Anshelevich, J.-C. Wang, P. Zhong, \emph{Local limit theorems for multiplicative free convolutions}, J. Funct. Anal. 267 (2014), no. 9, 3469--3499. 

\bibitem[BB05]{BB05} S.T.\ Belinschi and H.\ Bercovici, \emph{Partially defined semigroups
relative to multiplicative free convolution}, Int.\ Math.\ Res.\ Notices 2 (2005), 65--101.

\bibitem[BV92]{BV92} H.~Bercovici and D.~Voiculescu, 
\emph{L\'{e}vy-Hin\v{c}in type theorems for multiplicative and additive free convolution}, 
Pacific J.\ Math. 153 (1992), 217--248.	
	
\bibitem[BV93]{BV93} H.~Bercovici and D.~Voiculescu,
 \emph{Free convolution of measures with unbounded support}, Indiana Univ.\ Math.\ J.
 42 (1993), no.~3, 733--773.

\bibitem[Bia98]{Bia98} P. Biane, \emph{Processes with free increments}, Math.\ Z. 227 (1998), no.~1, 143--174.

\bibitem[Bil99]{MR1700749}P. Billingsley, \emph{Convergence of probability measures}, second ed.,
  Wiley Series in Probability and Statistics: Probability and Statistics, John
  Wiley \& Sons, Inc., New York, 1999.
  
\bibitem[Car03]{MR2004294}
J. Cardy, \emph{Stochastic {L}oewner evolution and {D}yson's circular
  ensembles}, J. Phys. A 36 (2003), L379--L386.

\bibitem[CL01]{MR1875671}
E. C{\'e}pa and D. L{\'e}pingle, \emph{Brownian particles with
  electrostatic repulsion on the circle: {D}yson's model for unitary random
  matrices revisited}, ESAIM Probab. Statist. 5 (2001), 203--224
  (electronic).

\bibitem[CDMG14]{CDMG14} M. D. Contreras, S. Diaz-Madrigal, P. Gumenyuk, \emph{Local duality
in Loewner equations},  J. Nonlinear Convex Anal. 15 (2014), 269--297. 
  
\bibitem[dMS16]{delMonacoSchleissinger:2016}
A.~del~Monaco and S.~Schlei{\ss}inger, \emph{Multiple SLE and the complex Burgers
  equation}, Math. Nachr. 289 (2016), no.~16, 2007--2018.  
  
\bibitem[dMHS18]{MR3764710} A. del Monaco, Ikkei Hotta, and S. Schlei{\ss}inger, 
\emph{Tightness results for infinite-slit limits of the chordal Loewner equation}, 
Comp. Methods Funct. Theory, Volume 18, Issue 1 (2018), 9--33.
  
\bibitem[Dub07]{MR2358649} J. Dub\'{e}dat, \emph{Commutation relations for
Schramm-Loewner evolutions}, Comm. Pure Appl. Math. 60 (2007), 1792--1847.   
  
\bibitem[Dur83]{duren83}
P. L. Duren,  \emph{Univalent functions}, volume 259 of Grundlehren der
  Mathematischen Wissenschaften [Fundamental Principles of Mathematical
  Sciences], Springer-Verlag, New York, 1983.  
  
\bibitem[FHS]{FHS} U. Franz, T. Hasebe, and S. Schlei{\ss}inger, 
 \emph{Monotone Increment Processes, Classical Markov Processes
and Loewner Chains}, to appear in Dissertationes Mathematicae, see also arXiv:1811.02873v2.  
  
\bibitem[G{\"a}r88]{MR968996}
J. G{\"a}rtner, \emph{On the {M}c{K}ean-{V}lasov limit for interacting
  diffusions}, Math. Nachr. 137 (1988), 197--248.
 
\bibitem[Gra07]{graham} K. Graham, \emph{On multiple Schramm-Loewner evolutions},  J. Stat. Mech. (2007).
 
\bibitem[HK18]{HK18} I. Hotta and M. Katori, \emph{Hydrodynamic Limit of Multiple SLE}, J. Stat. Phys. 171 (2018), 166--188. 

\bibitem[JVST12]{MR2919205}
F. Johansson~Viklund, A. Sola, and A. Turner, \emph{Scaling limits
  of anisotropic {H}astings-{L}evitov clusters}, Ann. Inst. Henri Poincar\'e
  Probab. Stat. 48 (2012), 235--257. 

\bibitem[Kar19]{Karrila} A. Karrila, \emph{Multiple SLE type scaling limits: from local to global}, 
arXiv:1903.10354.

\bibitem[Koz09]{Koz09} M. J. Kozdron, \emph{Using the Schramm-Loewner evolution to explain certain non-local observables in the 2D critical Ising model}, J. Phys. A 42 (2009).

\bibitem[Law05]{Lawler:2005}
G.~F. Lawler, \emph{Conformally invariant processes in the plane}, Mathematical
  Surveys and Monographs, vol. 114, American Mathematical Society, Providence,
  RI, 2005.  
 
\bibitem[Loe23]{MR1512136}
C. Loewner, \emph{Untersuchungen \"uber schlichte konforme {A}bbildungen
  des {E}inheitskreises. {I}}, Math. Ann. 89 (1923),  103--121. 
 
\bibitem[Mil06]{Mil06} P. D. Miller, \emph{Applied asymptotic analysis},
Graduate Studies in Mathematics, vol. 75, American Mathematical Society, Providence, RI, 2006.

\bibitem[MS13]{quantum}
J. Miller and S. Sheffield, \emph{Quantum {L}oewner {E}volution},
Duke Math. J. 165 (2016), 3241--3378.   

\bibitem[NS06]{ns06}  A. Nica and R. Speicher, \emph{Lectures on the Combinatorics of Free
Probability}, London Mathematical Society, Lecture Note Series 335, Cambridge
University Press, 2006.

\bibitem[Pom65]{Pom:1965}
C. Pommerenke, \emph{\"{U}ber die {S}ubordination analytischer {F}unktionen},
  J. Reine Angew. Math. 218 (1965), 159--173.
  
\bibitem[Pom75]{P75}
C. Pommerenke, \emph{Univalent functions}, Vandenhoeck \& Ruprecht, G\"ottingen, 1975.
  
\bibitem[Pom92]{MR1217706}
C. Pommerenke, \emph{Boundary behaviour of conformal maps}, Grundlehren der Mathematischen Wissenschaften,  Springer, Berlin, 1992.  
  
\bibitem[RS93]{MR1217451}
L.~C.~G. Rogers and Z.~Shi, \emph{Interacting {B}rownian particles and the
  {W}igner law}, Probab. Theory Related Fields 95 (1993), 555--570.  

\bibitem[SW05]{SW05}  O. Schramm and D. Wilson, \emph{SLE coordinate changes}, New York J. Math. 11 (2005), 659--669.

\bibitem[Voi87]{Voi87} D.\ Voiculescu, \emph{Multiplication of certain noncommuting
random variables}, J.\ Operator Theory 18 (1987), 223--235.

\bibitem[Voi93]{Voi93} D. Voiculescu, \emph{The Analogues of Entropy and of  Fisher's  Information  Measure in Free  Probability I}, 
Commun.  Math. Phys. 155 (1993), 71--92.

\end{thebibliography}
\def\cprime{$'$}
\providecommand{\bysame}{\leavevmode\hbox to3em{\hrulefill}\thinspace}
\providecommand{\MR}{\relax\ifhmode\unskip\space\fi MR }
\providecommand{\MRhref}[2]{%
  \href{http://www.ams.org/mathscinet-getitem?mr=#1}{#2}
}
\providecommand{\href}[2]{#2}

\end{document}